\numberwithin{equation}{thm}
\newcommand{\co}{:}
\title{Homotopy colimits of classifying spaces of abelian subgroups of a finite group}
\date{}
\address{Department of Mathematics, University of British Columbia, Vancouver BC V6T 1Z2, Canada}
\email{okay@math.ubc.ca}
\author{C\.{I}han Okay}
\begin{document}
  \maketitle 

\begin{abstract}
The  classifying space $BG$ of a topological group $G$ can be filtered by a sequence of subspaces $B(q,G)$, $q\geq 2$, using the descending central series of free groups. 
If $G$ is finite, describing  them   as homotopy colimits is  convenient when applying homotopy theoretic methods. In this paper we  introduce natural subspaces $B(q,G)_p\subset B(q,G)$ defined for a fixed prime $p$.  We show that $B(q,G)$ is stably homotopy equivalent to a wedge of $B(q,G)_p$ as $p$ runs over the primes dividing the order of $G$.  
 Colimits of abelian groups play an important role in understanding the homotopy type of  these spaces. Extraspecial $2$--groups are key examples, for which  these colimits turn out to be finite. We prove that for extraspecial $2$--groups  of order $2^{2n+1}$, $n\geq 2$, $B(2,G)$ does not have the homotopy type of a  $K(\pi,1)$ space thus answering in a negative way a question posted in \cite{adem1}. For a finite group $G$, we compute the complex $K$--theory of $B(2,G)$ modulo torsion.
\end{abstract}
\maketitle
\section{Introduction}
In \cite{adem1} a natural filtration of the classifying space $BG$ of a topological group $G$ is introduced:
\[
B(2,G)\subset \cdots\subset B(q,G)\subset \cdots\subset B(\infty,G)=BG.
\]
For a fixed $q\geq 2$, let $\Gamma^q(F_n)$ denote the $q$--th stage of the descending central series of the free group on $n$ generators. Then $B(q,G)$ is the geometric realization of the simplicial space whose $n$--simplices are the spaces of homomorphisms $\Hom(F_n/\Gamma^q(F_n),G)$. 
In this article we study homotopy--theoretic properties of these spaces.

Let $G$ be a finite group and $p$ a prime dividing the order of $G$. Consider the free pro--$p$ group $P_n$, the pro--$p$ completion of the free group $F_n$. As noted in \cite{adem1}, the geometric realization of the simplicial set $\catn \mapsto \Hom(P_n/\Gamma^q(P_n),G)$ gives a natural subspace of $B(q,G)$, and is denoted by $B(q,G)_p$. We prove that there is a stable homotopy equivalence:
\Thm{\label{int_stable} Suppose that $G$ is a finite group. There is a natural weak equivalence
\[
\bigvee_{p||G|}\Sigma B(q,G)_p\rightarrow \Sigma B(q,G)\;\; \text{ for all }q\geq2
\]
induced by the inclusions $B(q,G)_p\rightarrow B(q,G)$.
}

Let $\nN(q,G)$ denote the collection of subgroups of nilpotency class at most $q$ and $G(q)= \colim{\nN(q,G)}A$. The key observation in \cite{adem1} is the following fibration 
\[
 \hocolim{\nN(q,G)} G(q)/A \rightarrow B(q,G)\rightarrow BG(q),
\]
which can be constructed using the (homotopy) colimit description of $B(q,G)$.
This raises the following question of whether they are actually homotopy equivalent posted in \cite{adem1}. 
\begin{que}\cite[page 15]{adem1}
If $G$ is a finite group,  are the spaces $B(q,G)$ Eilenberg--Mac Lane spaces of type $K(\pi,1)$? 
\end{que}
 One of the  objectives of this paper is to show the existence of a certain class of groups for which  $B(2,G)$ does not  have the homotopy type of a $K(\pi,1)$ space. The following theorem is used to show that extraspecial $2$--groups are examples of such groups. (At a given order there are two types up to isomorphism.) 
\begin{thm}\label{int_fundamental}
Let $G_n$ denote an extraspecial $2$--group of order $2^{2n+1}$ then 
\[ \pi_1(B(2,G_n))\cong \colim{\nN(2,G_n)}A\cong G_n\times \ZZ/2 \;\; \text{ for } n\geq2. \]
\end{thm}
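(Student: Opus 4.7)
The statement contains two isomorphisms; I would prove them in sequence.

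\smallskip

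\noindent\textbf{First isomorphism.} Because $B(2,G)\simeq\hocolim{\nN(2,G)}BA$ (\cite{adem1}), $B(2,G_n)$ is the classifying space of the Grothendieck construction of the identity functor $A\mapsto A$ on $\nN(2,G_n)$. Since $\nN(2,G_n)$ has the trivial subgroup as minimum element, its nerve is contractible; the standard computation of $\pi_1$ of a Grothendieck construction over a simply connected base identifies $\pi_1(B(2,G_n))$ with $\colim{\nN(2,G_n)}A=:G_n(2)$. This step works for every finite group.

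\smallskip

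\noindent\textbf{Second isomorphism.} Fix $Z=\langle z\rangle=Z(G_n)$ and a symplectic basis $X_1,Y_1,\ldots,X_n,Y_n$ for $G_n/Z$, with $[X_i,Y_i]=z$ and $(X_i,Y_i)$ commuting elementwise with $(X_j,Y_j)$ for $i\neq j$. The cocone $A\hookrightarrow G_n$ yields a canonical surjection $p\co G_n(2)\twoheadrightarrow G_n$; the inclusion $Z\hookrightarrow G_n(2)$ unambiguously defines $z\in G_n(2)$. I set
\[\zeta:=[X_1,Y_1]\cdot z^{-1}\in G_n(2),\]
the commutator being formed in the group $G_n(2)$. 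Then $\zeta\in\ker p$. The plan is to show that $\zeta$ is central of order $2$, independent of the chosen pair, that $\ker p=\langle\zeta\rangle$, and that the extension splits.

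\smallskip

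\noindent\textbf{Role of $n\geq 2$.} Because $[X_1X_2,Y_1Y_2]=z^2=1$ in $G_n$, the subgroup $\langle X_1X_2,Y_1Y_2,z\rangle$ is abelian, yielding the relation $(X_1X_2)(Y_1Y_2)=(Y_1Y_2)(X_1X_2)$ in $G_n(2)$. Expanding this relation using the commutations among $X_i,X_j$, among $Y_i,Y_j$, and among $X_i,Y_j$ ($i\neq j$) supplied by the rank-two abelian subgroups yields an identity relating $[X_1,Y_1]$ and $[X_2,Y_2]$ in $G_n(2)$. Iterating and combining with further relations from abelian subgroups of the form $\langle X_iY_j,X_jY_i,z\rangle$ shows that $\zeta$ is independent of the symplectic pair, is central, and has order $2$. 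A candidate section $s\co G_n\to G_n(2)$ sends $g$ to the image of $g$ under $\langle g\rangle\hookrightarrow G_n(2)$; multiplicativity on commuting pairs is immediate, and on non-commuting pairs one verifies $s(g)s(h)=s(gh)\cdot\zeta^{\epsilon(g,h)}$ and adjusts $s$ by a coboundary so that $\epsilon\equiv 0$. Since the resulting $s(G_n)$ and $\langle\zeta\rangle$ generate $G_n(2)$ and intersect trivially, $G_n(2)=s(G_n)\times\langle\zeta\rangle\cong G_n\times\ZZ/2$.

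\smallskip

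\noindent\textbf{Main obstacle.} The crux is the commutator calculus establishing that $\zeta$ is well defined, central, of order $2$, and that $\ker p=\langle\zeta\rangle$, i.e.\ $|G_n(2)|\leq 2|G_n|$. These identities genuinely require a second symplectic pair: in $G_1\in\{D_8,Q_8\}$ the abelian subgroup lattice is too sparse and $G_1(2)$ is strictly larger than $G_1\times\ZZ/2$, which is precisely why the theorem requires $n\geq 2$. A secondary technical point is that the cocycle $\epsilon(g,h)$ measuring the failure of $s$ to be multiplicative is a coboundary; this is needed for the extension to split, and is handled by exhibiting enough relations from higher-rank abelian subgroups of $G_n$ to trivialize $\epsilon$ explicitly.
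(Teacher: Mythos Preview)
Your first isomorphism is fine and matches the paper's treatment.

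For the second isomorphism your outline and the paper's proof share the same endpoint but diverge in structure, and the divergence is exactly where your argument has a gap. You work directly in $G_n(2)$ and aim to show $\ker p=\langle\zeta\rangle$ by commutator calculus. You correctly identify this as ``the crux,'' but your proposal does not actually establish the bound $|\ker p|\leq 2$: knowing that $\zeta$ is central, of order~$2$, and independent of the symplectic pair still does not prevent $\ker p$ from being larger. The relation $(X_1X_2)(Y_1Y_2)=(Y_1Y_2)(X_1X_2)$ does yield $[X_1,Y_1][X_2,Y_2]=1$ in $G_n(2)$, hence $\zeta_1=\zeta_2^{-1}$, but turning such identities into a global order bound on $G_n(2)$ requires a presentation of the colimit, which you never write down.

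The paper circumvents this by first passing to the quotient: it considers $K_n=\colim_{\aA(G_n)}A/Z(G_n)$, which is a colimit of elementary abelian $2$--groups indexed by the polar space of $Sp_{2n}(2)$. The general reduction Theorem~\ref{rank2} (colimits of abelian groups are determined by their rank~$\leq 2$ members) then gives an explicit finite presentation of $K_n$, and an explicit chain of relations (your $(X_1X_2)(Y_1Y_2)$ trick appears there as one of five lines in display~(\ref{relations})) shows $K_n$ is \emph{abelian}, hence $K_n\cong(\ZZ/2)^{2n+1}$. The order bound on $\ker p$ then drops out of the snake of short exact sequences comparing $\widehat K_n\to K_n$ with $G_n\to G_n/Z(G_n)$. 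So the paper's extra move---quotienting by the center before computing the colimit---is precisely what converts your open-ended commutator calculus into a finite, checkable computation.

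Your splitting argument is also more elaborate than necessary. Once $\ker\psi\cong\ZZ/2$ is central and meets $Z(G_n)$ trivially, $\psi$ induces an isomorphism on commutator subgroups, so $[\widehat K_n,\widehat K_n]\cap\ker\psi=1$; choosing a complement of $\ker\psi$ in the abelianization and pulling back gives a subgroup $C\cong G_n$ with $\widehat K_n=C\times\ker\psi$. No cocycle manipulation or explicit trivialization of $\epsilon$ is needed.
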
  
Indeed for  the central products $D_8\circ D_8$ and $D_8\circ Q_8$, that is $n=2$, the higher homotopy groups are given by
\[
\pi_i(B(2,G_2))\cong \pi_i(\bigvee^{151}S^2)\; \text{ for } i>1.
\]

Another natural question is to compute the complex $K$--theory of a homotopy colimit. The main tool to study a  representable generalized cohomology theory of a homotopy colimit is the Bousfield--Kan spectral sequence whose $E_2$--term consists of the derived functors of the inverse limit functor.  We address this problem for the homotopy colimit of classifying spaces of abelian subgroups of a finite group, that is, for $B(2,G)$. 
\begin{thm}\label{int_Ktheory}
 There is an isomorphism
\[
\QQ \otimes K^i(B(2,G))\cong
\left\lbrace
\begin{array}{ll}
\QQ \oplus \bigoplus_{p\mid |G|} \QQ_p^{n_p} & \text{ if $i=0$, }\\
0 & \text{ if $i=1$, }
\end{array} \right.
\]
where $n_p$ is the number of (non-identity) elements of order a power of $p$ in $G$.
\end{thm}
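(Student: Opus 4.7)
The plan is to combine the stable splitting of Theorem~\ref{int_stable} with a Bousfield--Kan spectral sequence computation for each $p$-primary summand. By Theorem~\ref{int_stable},
\[
K^*(B(2,G))\otimes\QQ \;\cong\; \QQ \oplus \bigoplus_{p\mid|G|} \widetilde K^*(B(2,G)_p)\otimes\QQ,
\]
so it suffices to compute $\widetilde K^*(B(2,G)_p)\otimes\QQ$ for each prime $p\mid|G|$. The pro-$p$ analogue of the homotopy-colimit decomposition recalled in the introduction realizes $B(2,G)_p$ as $\hocolim_{A\in\mathcal{A}_p(G)} BA$, where $\mathcal{A}_p(G)$ is the poset of abelian $p$-subgroups of $G$ under inclusion. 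The corresponding Bousfield--Kan spectral sequence reads
\[
E_2^{s,t} \;=\; {\lim}^s_{\mathcal{A}_p(G)^{\mathrm{op}}} \widetilde K^t(BA) \;\Longrightarrow\; \widetilde K^{s+t}(B(2,G)_p).
\]

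For any finite abelian $p$-group $A$, the Atiyah--Segal completion theorem gives $\widetilde K^1(BA)=0$ and identifies $\widetilde K^0(BA) = \widetilde R(A)^\wedge_I$ with a finitely generated $\ZZ_p$-module of rank $|A|-1$, so only the row $t=0$ is non-zero. Passing to rational coefficients and extending scalars to $\overline{\QQ}_p$, Fourier analysis on the character ring yields a natural identification
\[
\widetilde K^0(BA)\otimes\overline{\QQ}_p \;\cong\; \bigl\{f\colon A\to\overline{\QQ}_p \mid f(1)=0\bigr\} \;=\; \bigoplus_{1\ne a\in A} \overline{\QQ}_p\cdot\delta_a,
\]
under which the restriction $\widetilde K^0(BA')\to\widetilde K^0(BA)$ for $A\subseteq A'$ becomes the obvious restriction of $\overline{\QQ}_p$-valued functions.

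The reduced functor then splits as a finite direct sum $\bigoplus_{1\ne g\in G_{(p)}} F_g$ indexed by non-identity $p$-power-order elements $g\in G$, where $F_g(A) = \overline{\QQ}_p\cdot\delta_g$ if $g\in A$ and $0$ otherwise. Each $F_g$ is supported on the upward-closed subposet $\{A\in\mathcal{A}_p(G) : g\in A\}$, which has $\langle g\rangle$ as minimum and hence contractible nerve; therefore ${\lim}^0 F_g = \overline{\QQ}_p$ and ${\lim}^s F_g = 0$ for $s\ge 1$. Descending to $\QQ_p$ via Galois invariants, the orbits of $\mathrm{Gal}(\overline{\QQ}_p/\QQ_p)$ on the non-identity $p$-power-order elements of $G$ correspond to non-trivial cyclic $p$-subgroups $H$, each orbit contributing $\QQ_p(\zeta_{|H|})$ of $\QQ_p$-dimension $\phi(|H|)$. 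The total $\QQ_p$-dimension is $\sum_{H\ne 1}\phi(|H|) = n_p$, so $\widetilde K^0(B(2,G)_p)\otimes\QQ \cong \QQ_p^{n_p}$ and $\widetilde K^1(B(2,G)_p)\otimes\QQ = 0$. Assembling via the stable splitting gives the formula.

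The main obstacle is the convergence of the Bousfield--Kan spectral sequence for complex $K$-theory on the infinite CW complex $\hocolim BA$: one must rule out $\varprojlim^1$ contributions coming from the finite skeletal filtration. Rationally these vanish since each $\widetilde R(A)^\wedge_I$ is a finitely generated $\ZZ_p$-module and the pro-systems of $K^*$ of finite skeleta of $BA$ satisfy the Mittag--Leffler condition, so the spectral sequence converges strongly after tensoring with $\QQ$. A secondary technical point is the compatibility of the $\bigoplus_g F_g$ decomposition with restriction maps, which follows immediately from the Fourier formula $f\mapsto f|_A$ for restriction in the delta basis.
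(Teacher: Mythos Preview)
Your argument is correct. The overall architecture (stable splitting, Bousfield--Kan spectral sequence, rational vanishing of higher limits) matches the paper's, but your execution of the key vanishing step differs.

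The paper runs the \emph{equivariant} Bousfield--Kan spectral sequence for $K_G^*$ of the homotopy colimit of the orbits $G/P$ --- so the $E_2$-page involves higher limits of the uncompleted functor $P \mapsto R(P)$ --- and only afterwards invokes Atiyah--Segal together with the exactness of $I(G)$-adic completion over the Noetherian ring $R(G)$. Its main tool for the rational vanishing of those higher limits is Theorem~\ref{torsion}: one first transports the problem to the simplex category $\mathbf{dn}$ via a left adjoint, observes that the rationalized representation ring is a pre-Mackey functor (restriction paired with normalized induction), and produces an explicit contracting homotopy. By contrast, you work with the non-equivariant spectral sequence and the already-completed functor $P \mapsto \widetilde{K}^0(BP)$, and your vanishing argument is the Fourier decomposition $I(A)\otimes\overline{\QQ}_p \cong \bigoplus_{1\neq g\in A}\overline{\QQ}_p\cdot\delta_g$ into atomic functors $F_g$, each supported on the cosieve $\{A : g\in A\}$ with minimum $\langle g\rangle$ and hence acyclic. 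This is essentially the alternative proof the paper records in Corollary~\ref{vanish} (phrased there as $\CC\otimes R(M_\sigma) \cong H^0(U(M_\sigma);\CC)$ for the underlying-set functor $U$, with the colimit of underlying sets a finite discrete space), made more explicit. Your route avoids both the pre-Mackey machinery and the passage to $\mathbf{dn}$, at the cost of extending scalars to $\overline{\QQ}_p$.

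One minor remark: your convergence worry is misplaced. The indexing poset is finite, so the skeletal filtration of the simplicial replacement is finite and the spectral sequence converges strongly; the infinitude of each individual $BA$ plays no role. The Galois-descent paragraph is correct but not needed for the rank count: since the higher limits are $\ZZ_p$-modules and $\overline{\QQ}_p$ is flat over $\QQ_p$, the $\QQ_p$-dimension of the zeroth limit already equals its $\overline{\QQ}_p$-dimension, namely $n_p$.
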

Torsion groups can appear in $K^1(B(2,G))$. An explicit example is the case of $G_2$, there is an isomorphism
\[
K^i(B(2,G_2)) 
\cong \left\{
\begin{array}{ll}
 \ZZ \oplus \ZZ_2 ^{31}&  \text{ if $i=0$, }  \\
(\ZZ /2)^9 &\text{ if $i=1$}. 
\end{array} \right.
\]

The organization of this paper is as follows. In \S \ref{sec:pre} we introduce the spaces $B(q,G)$ and $B(q,G)_p$ for discrete groups and list some of their basic properties. In \S  \ref{sec:finite} we assume $G$ is finite, and describe these spaces as homotopy colimits. We prove a stable decomposition of $B(q,G)$ (Theorem \ref{int_stable}). 
In \S \ref{sec:higherlimits} we turn to the higher limits as a preparation for the complex $K$--theory of $B(2,G)$. Under some assumptions, we prove a vanishing result of certain higher limits (Theorem \ref{torsion}).  The main result of \S \ref{sec:ktheory} is the computation of $\QQ\otimes K^*(B(2,G))$  as given in Theorem \ref{int_Ktheory}.
\S \ref{sec:colimits} contains a general observation on colimits of abelian groups (Theorem \ref{rank2}). As an application, in \S \ref{sec:colimext} we compute the colimit of abelian subgroups of an extraspecial $2$--group (Theorem \ref{int_fundamental}).
Key examples  which illustrate the main features discussed throughout the paper are given in \S \ref{sec:examples}. Basic properties of homotopy colimits are explained in the appendix, \S \ref{sec:app}.

The author would like to thank A. Adem, for his supervision; J. Smith, for pointing out Proposition \ref{cech}; F. Cohen and J. M. G\'{o}mez, for commenting on an early version of this paper.

\section{Filtrations of classifying spaces}\label{sec:filtration}

\subsection{Preliminaries}\label{sec:pre}

Let $\catDel$ be the category whose objects are  finite non-empty totally ordered sets  $\catn$, $n\geq 0$, with $n+1$ elements
\[
0\rightarrow 1\rightarrow 2\rightarrow \cdots \rightarrow n
\]
 and whose morphisms $\theta \co \catm\rightarrow \catn$ are order preserving set maps or alternatively functors. 
The \textit{nerve} of a small category $\catC$ is the simplicial set
\[
B\catC_n = \Hom _{\Cat}(\catn,\catC).
\] 
For instance,  the geometric realization of the nerve $BG_\n$ of a discrete group $G$ (regarded as a category with one object) is  the classifying space $BG$. An equivalent way of describing the classifying space is as follows. The assignment 
\[
\catn=(0\stackrel{e_1}{\rightarrow}1\stackrel{e_2}{\rightarrow}\cdots\stackrel{e_n}{\rightarrow} n)
\mapsto F_n 
\]
where  $F_n$  is the free group generated by  $\lbrace e_1,e_2,...,e_n \rbrace $, defines a faithful functor $\catDel \rightarrow \catGrp$ injective on objects. Then the classifying space of $G$ is isomorphic to the simplicial set
\[
\begin{array}{rcl}
BG_\n\co \catDel^{op} &\rightarrow &\catSet \\
\catn &\mapsto& \Hom_\catGrp(F_n,G).
\end{array}
\]

A generalization of this construction to certain quotients of  free groups is studied in \cite{adem1}. We recall the constructions and describe  some alternative versions.

\Def{\rm{ For a group $Q$  define a chain of groups inductively: $\Gamma ^1(Q)=Q$, $ \Gamma ^{q+1} (Q)= [ \Gamma ^{q} (Q) ,Q]   $.  The \textit{descending central series} of $Q$ is the normal series
$$
1\subset\cdots\subset \Gamma ^{q+1} (Q) \subset  \Gamma ^{q} (Q) \subset \cdots \subset  \Gamma ^{2} (Q) \subset \Gamma ^{1} (Q)=Q.
$$
}}

Now take $Q$ to be the free group $F_n$. For $q>0$, the natural maps $F_n\rightarrow F_n/\Gamma^q(F_n)$ induce inclusions of sets
\[
\Hom_\catGrp(F_n/\Gamma^q(F_n),G)\subset \Hom_\catGrp(F_n,G).
\]
Furthermore, the simplicial structure of $BG_\n$ induces a simplicial structure on the collection of these subsets.

\Def{\rm{Let $G$ be a discrete group.  We define a sequence of simplicial sets $B_\n(q,G)$, $q\geq 2$, by the assignment 
\[
\catn \mapsto \Hom_\catGrp(F_n/\Gamma^q(F_n),G)
\]
and denote the geometric realization $|B(q,G)_\n|$ by $B(q,G)$.
Define $B_\n(\infty ,G)=BG_\n$ and $B(\infty,G)=BG$ by convention.
}}

As in \cite{adem1} this gives rise to a filtration of the classifying space of $G$
\[
B(2,G)\subset B(3,G)\subset \cdots\subset B(q,G)\subset B(q+1,G) \subset \cdots\subset B(\infty,G)=BG.
\]
\subsection{$p$--local version}
An alternative construction is obtained by replacing  the free group $F_n$ by its pro--$p$ completion, namely the free pro--$p$ group $P_n$, see \cite{D-S-M-S} for their basic properties. It can be defined using the \textit{$p$-descending central series} of the free group 
\[
P_n=\ilim{}{} F_n/\Gamma ^q_p(F_n)
\]
where $\Gamma^q_p(F_n)$ are defined recursively: 
\[ \Gamma^1_p(F_n)=F_n \text{ and } \Gamma^{q+1}_p(F_n)=[\Gamma^q_p(F_n),F_n](\Gamma^q_p(F_n))^p.\]
The group $P_n$ contains $F_n$ as a dense subgroup. Observe that each quotient $F_n/\Gamma ^q_p(F_n)$  is a finite $p$--group.

Let $\catTGrp$ denote the category of topological groups.
We consider continuous group homomorphisms $\phi\in \Hom_\catTGrp(P_n,G)$  where $G$ is a discrete group. The image of $\phi$ is a finite $p$--subgroup of $G$. This follows from the fact that $P_n$ is compact and if $K$ is a subgroup of $P_n$ of finite index then $|P_n:K|$ is a power of $p$ by \cite[Lemma 1.18]{D-S-M-S}.
Then there exists  $q\geq 1$ such that $\phi$ factors as
\[
\begin{diagram}
P_n & \rTo^\phi & G\\
\dTo & \ruTo \\
F_n/\Gamma^q_p(F_n).
\end{diagram}
\]
This implies that $\Hom_\catTGrp(P_n,G)\subset \Hom_\catGrp(F_n,G)$ and also there is a simplicial structure induced from $BG_\n$.

We remark that a theorem of Serre on topological groups says that any (abstract) group homomorphism from a finitely generated pro--$p$ group to a finite group is continuous \cite[Theorem 1.17]{D-S-M-S}. This implies that when $G$ is finite any (abstract) homomorphism $\phi\co P_n\rightarrow G$ is continuous. More generally, as a consequence of \cite[Theorem 1.13]{N-S} if every subgroup of $G$ is finitely generated then  the image of an abstract homomorphism $\phi$ is a finite group, hence in this case $\phi$  is also continuous.

\Def{\rm{ Let $p$ be a prime integer and $G$ a discrete group. We define a sequence of simplicial sets $B_\n(q,G)_p$, $q\geq 2$, by 
\[
\catn \mapsto \Hom_\catTGrp(P_n/\Gamma^q(P_n),G).
\]
Define $B_\n(\infty ,G)_p $ to be the simplicial  set $\catn \mapsto \Hom_\catTGrp(P_n,G)$ and set $B(\infty,G)_p=|B_\n(\infty,G)_p|$.

}}

Let us examine the simplicial set $B_\n(\infty,G)_p$. 
 There is a natural map
\[
\theta \co \dlim{q} \Hom (F_n/\Gamma ^q_p(F_n),G)\rightarrow \Hom (P_n, G)
\]
induced by the projections $P_n\rightarrow F_n/\Gamma ^q_p(F_n)$. This map is injective since it is induced by injective maps and surjective since $\phi\co P_n\rightarrow G$ factors through $F_n/\Gamma^q_p(F_n)\rightarrow G$ for some $q\geq 1$, as observed above. Thus $\theta $ is a bijection of sets. 
Colimits in the category of simplicial sets can be constructed  dimension-wise and the geometric realization functor commutes with colimits. Therefore, for a discrete group $G$, we note that there is a homeomorphism
\[
\dlim{q} B(q,G,p)\rightarrow B(\infty,G)_p
\]
induced by $\theta$, where $B(q,G,p)$ is the geometric realization of the simplicial set defined by $B_n(q,G,p)=\Hom_\catGrp(F_n/\Gamma^q_p(F_n),G)$.

\section{The case of finite groups}\label{sec:finite}
 We restrict our attention to finite groups. Note that the following collections of subgroups have an initial object (the trivial subgroup), and they are closed under taking subgroups and the conjugation action of $G$. Define a collection of subgroups for $q\geq 2$ 
\[
\nN(q,G)=\set{H\subset G|\;\Gamma^q(H)=1}.
\]
These are  nilpotent subgroups of $G$ of class at most $q$. Observe that $\nN(2,G)$ is the collection of abelian subgroups of $G$.

Denote the poset of $p$--subgroups of $G$ by $\sS_p(G)$ (including the trivial subgroup) and set 
\[ \nN(q,G)_p=\sS_p(G)\cap \nN(q,G) \]
when $q=2$ this is the collection of abelian $p$--subgroups of $G$.

\subsection{Homotopy colimits}
In \cite{adem1} it is observed that there is a homeomorphism
\[
\colim{\nN(q,G)} BA\rightarrow B(q,G)
\]
 and furthermore the natural map 
\[
\hocolim{\nN(q,G)}BA\rightarrow \colim{\nN(q,G)}BA
\]
turns out to be a weak equivalence. A similar statement holds for the spaces $B(q,G)_p$.

\Pro{\label{hocolim}Suppose that  $G$ is a finite group.  Then there is a homeomorphism
\[ B(q,G)_p\cong \colim{\nN(q,G)_p}BP\]
and the natural map 
\[
\hocolim{\nN(q,G)_p}BP\rightarrow \colim{\nN(q,G)_p}BP
\]
is a weak equivalence.
}
\Proof{ There is a natural bijection of sets
\[
\colim{\nN(q,G)_p}\Hom_\catGrp(F_n,P)\rightarrow  \Hom_\catGrp(P_n/\Gamma^q(P_n),G).
\]
The image of a morphism $P_n/\Gamma^q(P_n)\rightarrow G$ is a $p$--group of nilpotency class at most $q$. Conversely, if $P\subset G$ is a $p$--group of nilpotency class $q$ then $F_n\rightarrow P$ induces a map $P_n\rightarrow P$ by taking the pro--$p$ completions, and factors through the quotient $P_n/\Gamma^q(P_n)$. 
This induces the desired homeomorphism $B(q,G)_p\cong \colim{\nN(q,G)_p}BP$. The natural map from the homotopy colimit to the ordinary colimit is a weak equivalence since the diagram of spaces is free (\S \ref{sec:app}). 
}

\Rem{\label{cofinal}\rm{For the homotopy colimits considered in this paper, it is sufficient to consider the sub-poset determined by the intersections of the maximal objects of the relevant poset. More precisely, let $M_1,M_2,...,M_k$ denote the maximal groups in the poset $\nN(q,G)$ and  $\mM(q,G)=\set{\cap_J M_i|\; J\subset \set{1,2,...,k}}$. The inclusion map
\[
\mM(q,G)\rightarrow \nN(q,G)
\] 
is \textit{right cofinal} and hence induces a weak equivalence  (\cite[Proposition 3.10]{wzz})
\[
\hocolim{\mM(q,G)}BM\rightarrow \hocolim{\nN(q,G)}BA.
\]
A similar observation holds for $\nN(q,G)_p$.
}}

\subsection{A stable decomposition of $B(q,G)$}

Recall that a finite nilpotent group $N$ is a direct product of its Sylow $p$--subgroups
\[
N\cong \prod_{p||N|}N_{(p)}.
\]
For a prime $p$ dividing the order of the group $G$, and $N\in \nN(q,G)$ define
\[
\pi_p(N)=\left\lbrace \begin{array}{ll}
N_{(p)} & \text{ if } p||N| \\
1 & \text{ otherwise }.
\end{array}\right.
\]

\Lem{\label{pushdown}The inclusion map $\iota_p\co\nN(q,G)_p\rightarrow \nN(q,G)$ of posets induces a weak equivalence
\[
\hocolim{\nN(q,G)_p} BP\rightarrow \hocolim{\nN(q,G)}B\pi_p(A).
\]
}
\Proof{ By freeness (\S 9) it is enough to consider ordinary colimits.  Let $B\co\nN(q,G)_p\rightarrow \catTop$ denote the functor $P\mapsto BP$. The map $\pi_p\co\nN(q,G)\rightarrow \nN(q,G)_p$ defined by $A\mapsto \pi_p(A)$ induces an inverse to the map induced by $\iota_p$ on the colimit
\[
\colim{\nN(q,G)_p} BP\rightarrow \colim{\nN(q,G)}B\pi_p(A).
\]
This follows from the fact that $ \pi_p\circ \iota_p$ is the identity on $\nN(q,G)_p$. 
}

\Thm{\label{decomposition} Suppose that $G$ is a finite group. There is a natural weak equivalence
\[
\bigvee_{p||G|}\Sigma B(q,G)_p\rightarrow \Sigma B(q,G)\;\; \text{ for all }q\geq2
\]
induced by the inclusions $B(q,G)_p\rightarrow B(q,G)$.
}
\Proof{ First note that the nerve of the poset $\nN(q,G)$ is contractible since the trivial subgroup is an initial object. Note that for $A\in \nN(q,G)$ each $BA$ is pointed via the inclusion $B1\rightarrow BA$.

 Let $\jJ$ denote the pushout category $0\leftarrow 01\rightarrow 1$. Define a functor $F$ from the product category $\jJ\times \nN(q,G)$ to $\catTop$ by
\[
F((0,A))=\pt,\; F((01,A))=BA\; \text{ and } F((1,A))=\pt.
\]
 Commutativity of homotopy colimits implies that
\[
\hocolim{\jJ}\hocolim{\nN(q,G)}F\cong \hocolim{\jJ\times \nN(q,G)} F \cong \hocolim{\nN(q,G)}\hocolim{\jJ}F.
\]
The nerve of $\nN(q,G)$ is contractible, in which case we can identify the homotopy colimit over $\jJ$ as the  suspension and conclude that the map
\begin{equation}\label{eq:suspension}
\Sigma (\hocolim{\nN(q,G)}BA)\rightarrow \hocolim{\nN(q,G)}\Sigma(BA)
\end{equation}
is a homeomorphism.
The natural map induced by the suspension of the inclusions $BA_{(p)}\rightarrow BA$
\[
\bigvee_{p||A|}\Sigma BA_{(p)} \rightarrow \Sigma BA 
\]
is a weak equivalence.
This follows from the splitting of suspension of products: 
\[ \Sigma(BA_{(p)}\times BA_{(q)} ) \simeq \Sigma (BA_{(p)})\vee \Sigma(BA_{(q)})\vee \Sigma(BA_{(p)}\wedge BA_{(q)}) \] 
and from the equivalence $\Sigma(BA_{(p)}\wedge BA_{(q)})\simeq \pt$ when $p$ and $q$ are coprime. The latter follows from the K\"{u}nneth theorem and the Hurewicz theorem by considering  the homology isomorphism induced by the inclusion $BA_{(p)}\vee BA_{(q)}\rightarrow BA_{(p)}\times BA_{(q)}$.

 Invariance of homotopy colimits under natural transformations which induce a weak equivalence on each object, the homeomorphism (\ref{eq:suspension}) and Lemma \ref{pushdown} give the following  weak equivalences
\begin{eqnarray*}
\Sigma(\hocolim{\nN(q,G)}BA)&\cong& \hocolim{\nN(q,G)}\Sigma(BA)\\
&\simeq & \hocolim{\nN(q,G)}\bigvee_{p||A|}BA_{(p)} \\
&\simeq & \bigvee_{p||G|}\hocolim{\nN(q,G)}B\pi_p(A)\\
&\simeq & \bigvee_{p||G|}\hocolim{\nN(q,G)_p}BP.
\end{eqnarray*}
When commuting the wedge product with the homotopy colimit, again an argument using the commutativity of homotopy colimits can be used similar to the suspension case. Note that the wedge product is a homotopy colimit. 
}
This stable equivalence immediately implies the following decomposition for a generalized  cohomology theory.
\begin{thm}\label{p-decomposition}
There is an isomorphism
\[
\tilde{h}^*(B(q,G))\cong \prod_{p||G|} \tilde{h}^*(B(q,G)_p)\; \text{ for all }q\geq 2
\]
where $\tilde{h}^*$ denotes a reduced cohomology theory. 
\end{thm}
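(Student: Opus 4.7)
The plan is to deduce the cohomological splitting directly from the stable decomposition established in Theorem \ref{decomposition}, using only two formal properties of a reduced cohomology theory: the suspension isomorphism and the behavior on (finite) wedges.

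First I would apply $\tilde{h}^*$ to the natural weak equivalence
\[
\bigvee_{p\mid|G|}\Sigma B(q,G)_p\xrightarrow{\;\simeq\;} \Sigma B(q,G)
\]
from Theorem \ref{decomposition}. Since $G$ is finite, the indexing set of primes dividing $|G|$ is finite, so the wedge is a finite wedge and the Eilenberg--Steenrod wedge axiom gives
\[
\tilde{h}^{*}\Bigl(\bigvee_{p\mid|G|}\Sigma B(q,G)_p\Bigr)\cong \prod_{p\mid|G|}\tilde{h}^{*}(\Sigma B(q,G)_p).
\]
Combining this with the weak equivalence above yields
\[
\tilde{h}^{*}(\Sigma B(q,G))\cong \prod_{p\mid|G|}\tilde{h}^{*}(\Sigma B(q,G)_p).
\]

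Next I would apply the suspension isomorphism $\tilde{h}^{*}(\Sigma X)\cong \tilde{h}^{*-1}(X)$ to both sides. The left-hand side becomes $\tilde{h}^{*-1}(B(q,G))$, and the right-hand side becomes $\prod_p \tilde{h}^{*-1}(B(q,G)_p)$, since the suspension isomorphism is natural and commutes with finite products. Reindexing (i.e.\ replacing $*$ by $*+1$) gives the desired isomorphism
\[
\tilde{h}^{*}(B(q,G))\cong \prod_{p\mid|G|}\tilde{h}^{*}(B(q,G)_p).
\]
Naturality in the inclusions $B(q,G)_p\to B(q,G)$ is automatic since the stable equivalence is induced by these inclusions.

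There is no real obstacle here: the work was done in Theorem \ref{decomposition}. The only mild point to keep in mind is that the argument requires a \emph{finite} wedge so that the wedge axiom applies to an arbitrary reduced cohomology theory (not just one satisfying the infinite wedge axiom); this is guaranteed by the hypothesis that $G$ is finite.
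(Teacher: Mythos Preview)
Your argument is correct and is precisely the approach the paper intends: it states the theorem as an immediate consequence of the stable equivalence of Theorem \ref{decomposition}, and your proof simply spells out the two standard ingredients (finite-wedge axiom and suspension isomorphism) that make ``immediate'' precise. Nothing is missing.
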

Therefore one can study homological properties of  $B(q,G)$ at a fixed prime. In particular, this theorem applies  to the complex $K$--theory of $B(q,G)$ and each piece corresponding to $B(q,G)_p$ can be computed using the Bousfield--Kan spectral sequence \cite{B-K}. 

\subsection{Homotopy types of $B(q,G)$ and $B(q,G)_p$}
We follow the discussion on the fundamental group of $B(q,G)$ from \cite{adem1}, similar properties are satisfied by $B(q,G)_p$. Let $\nN$ be a collection of subgroups of a finite group $G$. We require that $\nN$ has an initial element and is closed under taking subgroups. In particular, it can be taken as $\nN(q,G)$ or $\nN(q,G)_p$.
The fundamental group of a homotopy colimit is described in \cite[Corollary 5.1]{far}. There is an isomorphism
\[
\pi_1(\hocolim{\nN} BA)\cong \colim{\nN} A
\]
where the colimit is in the category of groups.
 We will study colimits of (abelian) groups  in the next section in more detail.

 For each $A\in \nN$, there is a  natural fibration $G/A\rightarrow BA\rightarrow BG$ and taking homotopy colimits one obtains a fibration
\begin{eqnarray}
\hocolim{\nN}G/A\rightarrow \hocolim{\nN}BA\rightarrow BG\label{eq:fibration}.
\end{eqnarray}
This is a consequence of a Theorem of  Puppe,  see \cite[Appendix HL]{far2}. The point is that each fibration has the same base space $BG$. Associated to this fibration  there is an exact sequence  of homotopy groups
\begin{eqnarray}\label{eq:homotopyex}
1\rightarrow \pi_1(\hocolim{\nN}G/A)\rightarrow \pi_1(\hocolim{\nN}BA)\stackrel{\psi}{\rightarrow} \pi_1(BG)\rightarrow \pi_0(\hocolim{\nN} G/A) \rightarrow 0,
\end{eqnarray}
where $\psi $ is the natural map 
\[
\pi_1(\hocolim{\nN}BA)\cong \colim{\nN} A\rightarrow G
\]
induced by the inclusions $A\rightarrow G$. 
Note that if $\nN$ contains all cyclic subgroups of $G$ then  $\hocolim{\nN}G/A$ is connected. Since in this case, the commutativity of 
\[
\begin{diagram}
\colim{\nN}A &\rTo^\psi& G\\
\uTo & \ruTo &\\
\Span{g}
\end{diagram}
\] 
implies that $\psi$ is surjective.

We point out here that for $\nN=\nN(q,G)$ the homotopy fibre $\hocolim{\nN}G/A$ is  homotopy equivalent to the pull-back of the universal principal $G$--bundle 
\[
\begin{CD}
E(q,G) & @>>> & EG \\
@VVV & & @VVV \\
B(q,G) & @>>>& BG
\end{CD}
\]
and this pull-back $E(q,G)$ can be defined as the geometric realization of a simplicial set as described in \cite{adem1}. It can be identified as a colimit
\[
E(q,G)\cong \colim{\nN(q,G)}G\times_A EA
\] 
and there is a commutative diagram 
\[
\begin{CD}
\hocolim{\nN(q,G)} G\times_A EA &@>\sim >>& \hocolim{\nN(q,G)}G/A\\
@VV\sim V &&@VVV\\
\colim{\nN(q,G)} G\times_A EA  & @>>> & \colim{\nN(q,G)}G/A
\end{CD}
\]
induced by the contractions $EA\rightarrow \pt$, and the weak equivalences as indicated. 
The exact sequence (\ref{eq:homotopyex}) becomes
\begin{eqnarray}\label{eq:homotopy}
0\rightarrow T(q)\rightarrow G(q)\stackrel{\psi}{\rightarrow} G\rightarrow 0,
\end{eqnarray}
where $G(q)=\pi_1(B(q,G))$ and $T(q)=\pi_1(E(q,G))$. 

Consider  the universal cover $\widetilde{B(q,G)}$ of $B(q,G)$. Again using the Theorem of Puppe, it can be described as the homotopy fibre of the homotopy colimit of the natural fibrations 
$ G(q)/A\rightarrow BA\rightarrow BG(q) $ (\cite[Theorem 4.4]{adem1})
\[
\begin{CD}
\widetilde{B(q,G)}\simeq\hocolim{\nN(q,G)}G(q)/A &@>>>&B(q,G)\\
 &@. & @VVV\\
 &@. & BG(q).
\end{CD}
\]
The question of $B(q,G)$ having the homotopy type of a $K(\pi,1)$ space is equivalent to asking whether  the classifying space functor $B$ commutes with colimits
\[
B(q,G)=\colim{\nN(q,G)}BA\rightarrow B(\colim{\nN(q,G)}A).
\]
The difference is measured by the simply connected, finite dimensional complex $\widetilde{B(q,G)}$, or equivalently by the values of the higher limits 
\[
H^i(\widetilde{B(q,G)};\ZZ)\cong \ilim{\nN(q,G)}{i} \ZZ[G(q)/A]
\]
as implied by the Bousfield--Kan spectral sequence. 

The class of extraspecial $2$--groups provides examples for which $B(2,G)$ is not  homotopy equivalent to a $K(\pi,1)$ space. 
This follows from  the computation of the fundamental group of $B(2,G)$ that is the colimit of abelian subgroups of $G$, see \S \ref{sec:colim extra} and \S \ref{sec:examples}.

\section{Higher limits}\label{sec:higherlimits}
As the higher limits appear in the $E_2$--term of the Bousfield--Kan spectral sequence, in this section we discuss some of their properties. The main theorem of this section is a vanishing result of the higher limits of the contravariant part of a pre-Mackey functor $R\co\catdn\rightarrow \catAb$ where $\catdn$ denotes the poset of non-empty subsets of $\set{1,...,n}$ ordered by reverse inclusion. 

Let $\mathbf{C}$ be a small category and $F\co\catC \rightarrow \catAb$ be a \textit{contravariant} functor from $\catC$ to the category of abelian groups. $\catAb^\catC$ denotes the category of contravariant functors $\catC\rightarrow \catAb$.
Observe that
\[
\ilim{}{} F\cong \Hom_\catAb(\ZZ ,\ilim{}{} F)\cong \Hom_{\catAb^\catC}(\underline{\ZZ} ,F).
\]
\Def{\rm{ Derived functors of the inverse limit of $F\co\catC\rightarrow \catAb$ are defined by
\[
\ilim{}{i} F\equiv \Ext^i_{\catAb^\mathbf{C}}(\underline{\ZZ},F). 
\]
}}
A projective resolution of $\underline{\ZZ}$ in $\catAb^\catC$  can be obtained in the following way, for details see \cite{webb}. First note that the functors $F_c\co\catC\rightarrow \catAb$ defined by $F_c(c')=\ZZ \; \Hom_\mathbf{C} (c',c)$ are projective functors and by Yoneda's lemma 
\[
\Hom_{\catAb^\catC}(F_c,F)\cong F(c).
\] 
Let $\catC\setminus -\co\catC\rightarrow \catS$ denote the functor which sends an object $c$ of $\catC$ to the nerve of the under category $\catC\setminus c$. The nerve $B(\catC\setminus c)$ is contractible since the object $c\rightarrow c$ is initial. Then we define a resolution $P_*\rightarrow \underline{\ZZ}$ of the constant functor as the composition $ P_*=C_*\circ \catC\setminus -$ where $C_*\co\catS\rightarrow \catAb$ is the functor which sends a simplicial set $X$ to the associated chain complex  $C_*(X)=\ZZ [X_*]$.
At each degree $n$, there are  isomorphisms
\[
\Hom_{\catAb^\catC}(P_n,F)\cong \Hom_{\catAb^\catC}(\bigoplus_{x_0\leftarrow \cdots \leftarrow x_n} F_{x_n},F)\cong \underset{x_0\leftarrow x_1\leftarrow \cdots\leftarrow x_n}{\prod}F(x_n).
\]
Differentials are induced by the simplicial maps between chains of morphisms. The following result describes them explicitly.
\begin{lem}[\cite{oliver}]\label{lim-complex}
 $\ilim{}{i}F\cong H^i(C^*(\mathbf{C};F),\delta ) $ where
$$
C^n(\mathbf{C};F)=\underset{x_0\leftarrow x_1\leftarrow \cdots\leftarrow x_n}{\prod}F(x_n)
$$
for all $n\geq 0$ and where for $U \in C^n(\mathbf{C};F)$

\begin{eqnarray*}
\delta (U)(x_0\leftarrow x_1\leftarrow \cdots\leftarrow x_n\stackrel{\phi}{\leftarrow}x_{n+1})=\sum_{i=0}^n(-1)^iU(x_0\leftarrow \cdots\leftarrow \hat{x}_i\leftarrow \cdots\leftarrow x_{n+1}) \\
+(-1)^{n+1}F(\phi )(U(x_0\leftarrow \cdots\leftarrow x_n)).
\end{eqnarray*}
\end{lem}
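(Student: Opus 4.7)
The plan is to compute $\ilim{}{i} F = \Ext^i_{\catAb^{\mathbf{C}}}(\underline{\ZZ}, F)$ directly by applying $\Hom_{\catAb^{\mathbf{C}}}(-,F)$ to the projective resolution $P_\ast \to \underline{\ZZ}$ already constructed in the excerpt and identifying the resulting cochain complex with $(C^\ast(\mathbf{C};F), \delta)$. Once this identification is made, the conclusion $\ilim{}{i} F \cong H^i(C^\ast(\mathbf{C};F),\delta)$ follows from the definition of the derived functor.

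First I would verify that $P_\ast \to \underline{\ZZ}$ is indeed a projective resolution. Evaluated at an object $c$, $P_n(c)$ is the free abelian group on the set of $n$-simplices of the nerve $B(\mathbf{C}\setminus c)$, with augmentation induced by $B(\mathbf{C}\setminus c) \to \pt$. Exactness at each $c$ is then the standard acyclicity of the reduced chain complex of a contractible simplicial set, which applies because $\mathrm{id}_c$ is initial in $\mathbf{C}\setminus c$. For projectivity, I would use the simplicial decomposition of the nerve to identify
\[
P_n \;\cong\; \bigoplus_{x_0 \leftarrow x_1 \leftarrow \cdots \leftarrow x_n} F_{x_n}
\]
as a direct sum indexed by chains of morphisms in $\mathbf{C}$; each $F_{x_n}$ is projective by the Yoneda argument recalled in the excerpt, and a direct sum of projectives is projective.

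Next I would apply $\Hom_{\catAb^{\mathbf{C}}}(-,F)$ to this decomposition. The Yoneda identification $\Hom_{\catAb^{\mathbf{C}}}(F_{x_n}, F) \cong F(x_n)$ turns the direct sum into the product
\[
\Hom_{\catAb^{\mathbf{C}}}(P_n, F) \;\cong\; \prod_{x_0 \leftarrow \cdots \leftarrow x_n} F(x_n) \;=\; C^n(\mathbf{C};F),
\]
so the cochain groups are exactly right. The coboundary is the alternating sum $\sum_{i=0}^{n+1}(-1)^i d_i^\ast$ coming from the simplicial face maps on the nerve. For $0 \le i \le n$ the face $d_i$ deletes a non-terminal vertex of the chain, so the target projective $F_{x_{n+1}}$ is unchanged and the induced map is simply restriction along the chain-deletion, producing the term $(-1)^i U(x_0 \leftarrow \cdots \leftarrow \hat{x}_i \leftarrow \cdots \leftarrow x_{n+1})$. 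The remaining face $d_{n+1}$ drops the terminal vertex $x_{n+1}$, replacing the target projective $F_{x_{n+1}}$ by $F_{x_n}$; under $\Hom_{\catAb^{\mathbf{C}}}(-,F)$ this map of projectives corresponds via Yoneda to $F(\phi) \colon F(x_n) \to F(x_{n+1})$, giving the $(-1)^{n+1}$ term of the formula for $\delta$.

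The conceptual content is fully set up by the material preceding the lemma in the excerpt, so there is no deep obstacle. The main technical point deserving care is the last one: recognizing that exactly one face map, the one omitting the terminal object of the chain, carries a non-trivial action of $F$, and verifying that its sign is $(-1)^{n+1}$. All the other faces act by the identity on the representable factor, so only this last face sees the contravariant action of $F$ on a morphism.
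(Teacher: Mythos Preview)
Your proposal is correct and follows precisely the approach the paper itself sketches in the paragraph immediately preceding the lemma: build the projective resolution $P_\ast$ from the nerves of the under categories, identify $\Hom_{\catAb^{\mathbf{C}}}(P_n,F)$ with $\prod_{x_0\leftarrow\cdots\leftarrow x_n} F(x_n)$ via Yoneda, and read off the differential from the simplicial face maps. The paper does not supply a separate proof of the lemma beyond this setup (it is stated with a citation to Oliver), so your write-up simply makes explicit what the paper leaves as an outline.
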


A useful property of higher limits, which allows the change of the indexing category, is the following.
\begin{pro}\cite[Lemma 3.1]{JM}\label{properties}
Fix a small category $\mathbf{C}$ and a contravariant functor $F\co\mathbf{C}\rightarrow \mathbf{Ab}$. Let $\mathbf{D}$ be a small category and assume that $g\co\mathbf{D}\rightarrow \mathbf{C}$ has a left adjoint. Set $g^*F=F\circ g \co \mathbf{D}\rightarrow \mathbf{Ab}$ then
$
H^*(\mathbf{C};F)\cong H^*(\mathbf{D};g^*F).
$

\end{pro}

Assume that the indexing category $\mathbf{C}$ is a finite partially ordered set (poset) whose morphisms are $i\rightarrow j$ whenever $i\leq j$. We consider an appropriate filtration of $F$ as in \cite{grodal}. 
\Def{\rm{
A \textit{height function} on a poset is a strictly increasing map of posets $ht\co\mathbf{C}\rightarrow \ZZ$. 
}}
A convenient height function can be defined by $ht(A)= - \Dim (| \mathbf{C}_{\geq A} |)$ where $\mathbf{C}_{\geq A}$ is the subposet of $\mathbf{C}$ consisting of elements $C\geq A$. Let  $N=\Dim (|\mathbf{C}|)$ then it is immediate that $\ilim{}{i} F$ vanishes for $i>N$. The functor $F$ can be filtered in such a way that the associated quotient functors are concentrated at  a single height. Define a sequence of subfunctors
$$
 F_{N}\subset \cdots\subset F_{2} \subset F_{1}\subset F_0
$$ 
by  $F_{i}(A)=0$ if $ht(A)>-i$ and $F_{i}(A)=F(A) $ otherwise. This induces a decreasing filtration on the cochain complexes hence there is an associated spectral sequence whose $E_0$--term is
$$
E^{i,j}_0=C^{i+j}(\mathbf{C};F_i/F_{i+1})= \prod_{A\in \mathbf{C}|ht(A)=-i} \Hom _{\ZZ} ( C_{i+j}(|\mathbf{C}_{\geq A}|,|\mathbf{C}_{> A} |),F(A) ) 
$$
with differentials $d_0\co C^k(\mathbf{C};F_i/F_{i+1})\rightarrow C^{k+1}(\mathbf{C};F_i/F_{i+1})$. $E_1$--term is the  cohomology of the pair $(|\mathbf{C}_{\geq A}|,|\mathbf{C}_{> A} |)$ in coefficients $F(A)$ that is
\begin{eqnarray}\label{spec}
E^{i,j}_1= 
\prod_{A\in \mathbf{C}|ht(A)=-i} H^{i+j} ((|\mathbf{C}_{\geq A}|,|\mathbf{C}_{> A} |);F(A) ).
\end{eqnarray}
Fix $A$ and let $A'>A$ such that $ht(A')=ht(A)+1$, then differential $d_1$ can be described as the composition
\begin{eqnarray*}
H^k((|\mathbf{C}_{\geq A'}|,|\mathbf{C}_{>A'}|);F(A'))\stackrel{F(A\leq A')}{\rightarrow}
H^k((|\mathbf{C}_{>A}|,|\mathbf{C}_{>A,ht\geq ht(A)+2}|);F(A))\\
\stackrel{\partial}{\rightarrow}
H^{k+1}((|\mathbf{C}_{\geq A}|,|\mathbf{C}_{>A}|);F(A))
\end{eqnarray*}
where $\partial$ is the boundary map associated to the triple 
$$(|\mathbf{C}_{\geq A}|,|\mathbf{C}_{>A}|,|\mathbf{C}_{>A,ht\geq ht(A)+2}|). $$

We now consider higher limits over specific kinds of diagrams. Let $\mathbf{dn}$ be the category associated to the non-degenerate simplexes of the standard $n$--simplex, the objects are increasing sequences of numbers  $\sigma _k= [i_0<\cdots<i_k]$ where $0\leq i_j\leq n$ and the morphisms are generated by the face maps $d_j([i_0<\cdots<i_k])=[i_0<\cdots<\hat{i_j}<\cdots<i_k]$ for $0\leq j\leq k$. 

\begin{pro}\label{cech}
For any contravariant functor $F\co\mathbf{dn}\rightarrow \mathbf{Ab}$ the spectral sequence (\ref{spec}) collapses onto the horizontal axis hence gives a long exact sequence
\begin{eqnarray}\label{complex}
0\rightarrow \prod_{\sigma _0} F(\sigma _0)\rightarrow \prod_{\sigma _1} F(\sigma _1) \rightarrow \cdots
 \rightarrow \prod_{\sigma _{n-1}} F(\sigma _{n-1}) \rightarrow F(\sigma _n)\rightarrow 0
\end{eqnarray}
 and where for $U \in C^{k-1}_{\mathbf{dn}}(F)=\prod_{\sigma_{k-1}} F(\sigma _{k-1})$,

\begin{eqnarray*}
\delta ^{k-1}(U)(\sigma _k)=\sum_{j=0}^{k}(-1)^{k-j} F(d_j)U(d_j(\sigma _k)) .
\end{eqnarray*}

\end{pro}
\begin{proof}
For a simplex $\sigma _k \in \mathbf{dn}$, let $\mathbf{dn}_{\geq k}$ be the poset of simplices $\sigma \geq \sigma _k$ and $\mathbf{dn}_{>k}$ denote the poset of simplices $\sigma > \sigma _k$. Observe that the pair $(|\mathbf{dn}_{\geq k}|,|\mathbf{dn}_{> k}|)$ is homeomorphic to $(|\Delta ^k|,|\partial \Delta ^k|)$ where $\Delta ^k$ is the standard $k$--simplex with boundary $\partial \Delta ^k$. 
The spectral sequence of the filtration has
$$
E^{k,j}_1= \prod_{\sigma _k} H^{k+j} ((|\mathbf{dn}_{\geq k}|,|\mathbf{dn}_{> k}|);F(\sigma _k) ).
$$
Therefore $E_1$--term vanishes unless $j=0$, and otherwise 
\[ E^{k,0}_1= \prod_{\sigma _k} H^k ((|\mathbf{dn}_{\geq k}|,|\mathbf{dn}_{> k}|);F(\sigma _k) )=\prod_{\sigma _k} F(\sigma _k) \]
hence collapses onto the horizontal axes, resulting in a long exact sequence.
The differential is induced by the alternating sum of the face maps $d_j(\sigma_k)$ for $0\geq j\geq k$.
\end{proof}

In some cases, it is possible to show that the higher limits vanish. The next theorem is an illustration of this instance. First let us recall the definition of a \textit{pre-Mackey functor} in the sense of Dress \cite{dress}. Let $M\co\catC\rightarrow \catD$ be a bifunctor, that is a pair of functors $(M^*,M_*)$ such that $M^*$ is contravariant, $M_*$ is covariant and both coincide on objects.

\Def{\rm{ A \textit{pre-Mackey functor} is a bifunctor $M\co\catC\rightarrow\catD$ such that for any pull-back diagram 
 \[
\begin{CD}
X_1 @>\beta_1>> X_2   \\
@V\alpha_1VV @V\beta_2VV       \\
X_3 @>\alpha_2>> X_4  
\end{CD}
\]
in $\catC$  the diagram
\[
\begin{CD}
M(X_1) @>M_*(\beta_1)>> M(X_2) \\
@AM^*(\alpha_1)AA @AM^*(\beta_2)AA  \\
M(X_3) @>M_*(\alpha_2)>> M(X_4)
\end{CD}
\]
commutes.
}}
\noindent We give a direct proof of the following theorem, which can also be deduced from an application of \cite[Theorem 5.15]{JM} to the category $\mathbf{dn}$.
\begin{thm}\label{torsion}
Let $R\co\mathbf{dn}\rightarrow \mathbf{Ab}$ be a pre-Mackey functor then
 $\ilim{}{s} R^*=0$ for $s>0$.
\end{thm}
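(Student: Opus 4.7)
The plan is to reduce via Proposition~\ref{cech} to showing that the cochain complex
\[ 0 \to \prod_{\sigma_0} R^*(\sigma_0) \to \prod_{\sigma_1} R^*(\sigma_1) \to \cdots \to R^*(\sigma_n) \to 0 \]
is acyclic in positive degrees, and then to proceed by induction on $n$ using an explicit chain homotopy built from the covariant part $R_*$. The base case $n=0$ is immediate ($\mathbf{dn}$ has a single object). For the inductive step, I would fix a vertex $v$ (say $v=0$) and set
\[ (h^k U)(\tau) = \begin{cases} (-1)^k R_*(f_\tau)\, U(\{v\} \cup \tau) & v \notin \tau, \\ 0 & v \in \tau, \end{cases} \]
where $f_\tau \colon \{v\}\cup\tau \to \tau$ is the face map removing $v$. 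The crucial consequence of the pre-Mackey axiom is the splitting identity $R^*(f) R_*(f) = \mathrm{id}$ for every morphism $f$ of $\mathbf{dn}$: every morphism in this poset is mono, so the meet of $X$ with itself over $Y$ is just $X$, and the Mackey axiom applied to the resulting pullback square yields the identity.

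Computing $(\delta^{k-1} h^k + h^{k+1}\delta^k)(U)(\sigma)$, the case $v \in \sigma$ collapses to $U(\sigma)$ by a single use of the splitting identity. In the case $v \notin \sigma$, the Mackey axiom applied to the pullback square
\[
\begin{CD}
\{v\} \cup \sigma @>>> \{v\}\cup d_l \sigma \\
@VVV @VVV \\
\sigma @>>> d_l \sigma
\end{CD}
\]
in $\mathbf{dn}$ (whose corners are the union and the intersection of $\sigma$ with $\{v\}\cup d_l\sigma$) rewrites each cross-composite $R_*(f_\sigma) R^*(d_l)$ appearing in $h\delta$ as $R^*(d_l) R_*(f_{d_l\sigma})$, matching the corresponding terms of $\delta h$ and producing exact cancellation. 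What remains is $(\delta h + h\delta)(U)(\sigma) = \pi_\sigma U(\sigma)$, where $\pi_\sigma := R_*(f_\sigma) R^*(f_\sigma)$ is an idempotent endomorphism of $R^*(\sigma)$ (idempotency again from the splitting identity).

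Thus $\mathrm{id}_{C^\bullet}$ is chain-homotopic to the idempotent chain map $P$ that acts as identity on cochains supported on simplices containing $v$ and as $\pi$ on the others, so $C^\bullet \simeq PC^\bullet \oplus (1-P)C^\bullet$. A further Mackey computation shows $\pi$ commutes with all restrictions and transfers along morphisms of the sub-poset $\mathbf{d(n-1)} \subset \mathbf{dn}$ of simplices missing $v$, so $(1-\pi) R^*$ is a sub-pre-Mackey functor on $\mathbf{d(n-1)}$. The summand $(1-P)C^\bullet$ is naturally identified with the Čech-type complex of $(1-\pi) R^*$ on $\mathbf{d(n-1)}$, whose positive-degree cohomology vanishes by the inductive hypothesis. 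For $PC^\bullet$, the Mackey isomorphism $R_*(f_\sigma) \colon R^*(\{v\}\cup\sigma) \xrightarrow{\cong} \pi R^*(\sigma)$, with inverse $R^*(f_\sigma)$, identifies $PC^k$ with $C^k_A \oplus C^{k+1}_A$ and exhibits $PC^\bullet$ as the non-negative truncation of the mapping cone of $\mathrm{id}\colon C^\bullet_A \to C^\bullet_A$, whose cohomology vanishes in positive degrees. The main obstacle I anticipate is the careful sign-bookkeeping in the cross-term cancellation for $v \notin \sigma$ and the clean verification that the resulting connecting homomorphism in the long exact sequence for $PC^\bullet$ is $\pm\mathrm{id}$.
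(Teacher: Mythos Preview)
Your computation of $\delta h + h\delta$ is correct, but the conclusion you draw from it is off by a complement. You obtain $(\delta h + h\delta)(U)(\sigma) = U(\sigma)$ when $v\in\sigma$ and $=\pi_\sigma U(\sigma)$ when $v\notin\sigma$; that is, $\delta h + h\delta = P$ in your notation. This says $P$ is null-homotopic, equivalently that $\mathrm{id}$ is chain-homotopic to $\mathrm{id}-P$, not to $P$. Once you make this correction the mapping-cone analysis of $PC^\bullet$ becomes unnecessary: $P\simeq 0$ and $P$ idempotent give $H^*(PC^\bullet)=0$ immediately, so $H^*(C^\bullet)\cong H^*((1-P)C^\bullet)$. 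Your identification of $(1-P)C^\bullet$ with the \v Cech complex of $(1-\pi)R$ on $\mathbf{d(n-1)}$ then goes through (the identity $R^*(f_\sigma)(1-\pi_{\sigma})=0$, which follows from your splitting identity, is exactly what forces the restricted differential to vanish on simplices containing $v$), and the inductive hypothesis finishes. With this fix the argument is complete and the sign-tracking you flagged as the main obstacle disappears.

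The paper's proof takes a related but genuinely different route. It also inducts on $n$ and also pivots on the vertex $0$, but rather than building a single contracting homotopy it filters $C^*_{-1}\subset C^*_0\subset C^*_{\mathbf{dn}}$ according to whether simplices begin with $0$, applies the inductive hypothesis to the associated graded pieces (which are \v Cech complexes for the honest restrictions $R\circ\iota_0$ and $R\circ\iota_1$ to $\mathbf{d(n-1)}$), and is thereby reduced to proving $H^1(C^*_0)=0$. That last step is handled by an explicit map $\Phi$ assembled from iterated Mackey composites $R_*(\alpha_j)R^*(\alpha_i)$, with a fairly intricate term-by-term cancellation. Your approach is more uniform---one homotopy formula working in all degrees at once rather than a filtration plus a bespoke degree-one argument---at the modest cost of having to check that $(1-\pi)R$ is again pre-Mackey on the smaller poset, which follows from the same pullback squares you already used.
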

\begin{proof}
We do induction on $n$. The case $n=0$ is trivial since the complex (\ref{complex}) is concentrated at degree $0$. There are inclusions of categories 
\[ \iota_0\co\mathbf{dn-1}\rightarrow \mathbf{dn}\]
 defined by $[i_1<...<i_k]\mapsto [i_1+1<...<i_k+1]$ and 
 \[ \iota_1\co\mathbf{dn-1}\rightarrow \mathbf{dn} \] 
 where  $[i_1<...<i_k]\mapsto [0<i_1+1<...<i_k+1]$.
Let $C^*_{\mathbf{dn}}$ denote the complex $C^*_{\mathbf{dn}}(R)$ in (\ref{complex}) that is
$
C^k_{\mathbf{dn}}=\prod_{\sigma _k} R(\sigma _k).
$ Define a filtration $C^*_{-1}\subset C^*_0\subset C^*_\mathbf{dn}$ such that 
$$C^k_{0}=\prod_{\sigma _k=[0<i_1<...<i_k]} R(\sigma _k)$$
 and $C^k_{-1}=C^k_0$ for $k>0$ and $C^0_{-1}=0$. Then by induction $H^i(C^*_\mathbf{dn}/C^*_0)=H^i(C^*_\mathbf{dn-1}(R\circ\iota_0))=0$ for $i>0$ and $H^i(C^*_{-1})=H^{i-1}(C^*_\mathbf{dn-1}(R\circ \iota_1))=0$ for $i>1$.
And short exact sequences associated to the filtration $C^*_{-1}\subset C^*_0\subset C^*_\mathbf{dn}$ implies that it suffices to prove $H^1(C^*_{0})=0$.

For a simplex $\sigma =[0<i<...<j]$ and morphisms $\alpha_i \co\sigma \rightarrow [0<i]$ and $\alpha_j \co\sigma \rightarrow [0<j]$ in $\mathbf{dn}$,  set $\phi _{\sigma }= R_*(\alpha_j )R^*(\alpha_i ) $. 

For a fixed $i$, define a map $\Phi _i\co R([0<i])\rightarrow R([0])$ by
\begin{eqnarray*}
\Phi _i=  \sum_{\sigma =[0<i<...<j]} (-1)^{|\sigma |-1} R_*(d^j_1)\phi _{\sigma} \;\;\text{ where }\;\;
d^j_1\co[0<j]\rightarrow [0],
\end{eqnarray*}
where $|\sigma|$ denotes the dimension of the simplex $\sigma$. Combining these maps for all $1\leq i \leq n$  define $\Phi \co \prod_{l=1}^n R([0<l])\rightarrow R([0])$ by
\begin{eqnarray*}
\Phi =\sum_{i=1}^{n} \Phi _i\pi _i,
\end{eqnarray*}
where $\pi _i$ denotes the natural projection $\prod_{l=1}^nR([0<l])\rightarrow R([0<i])$ to the $i^{th}$ factor. 

We need to show that if $U$ is in the kernel of $\delta ^1$, then $\delta ^0 \Phi (U)=U$ or equivalently $R^*(d^i_1)\Phi (U)= \pi _i (U)$ for all $1\leq i \leq n$. 
Fix $k$, and compute the composition 
\begin{eqnarray*}
R^*(d^k_1)\Phi &=& \sum_{i=1}^{n} \sum_{\sigma =[0<i<...<j]}(-1)^{|\sigma |-1} R^*(d^k_1)R_*(d^j_1)\phi _{\sigma}\\
&=&\pi _k + \sum_{i=1}^{n} \left( \sum_{\sigma |k\in \sigma \neq [0<k]}(-1)^{|\sigma |-1}R^*(d^k_1)R_*(d^j_1)\phi _{\sigma}+ \sum_{\sigma |k\notin \sigma } (-1)^{|\sigma |-1}R^*(d^k_1)R_*(d^j_1)\phi _{\sigma} \right)
\end{eqnarray*}
in the second line first use the identity $ R^*(d^k_1)R_*(d^j_1)\phi _{[0<k]}=\pi_k $ then separate the summation into two parts such that it runs over the simplices which  contain $\{ k \}$ and do not contain $\{ k\}$. Suppose that $\sigma =[0<i<...<j]$ does not contain $\{ k \}$ and $\overline{\sigma}$ denote the simplex obtained from $\sigma$ by adjoining $\{ k \}$. There are three possibilities for $\overline{\sigma}$: 
\[
\overline{\sigma}= \left\{
\begin{array}{l l}
\text{$[0<i<...<k<...<j]$} & \text{ if } i<k<j, \\
 \text{$[0<i<...<j<k]$} & \text{ if } j<k, \\
\text{$[0<k<i<...<j]$} & \text{ if } k<i.
\end{array}\right.
\]
The corresponding terms of the first two cases cancel out when $\sigma$ and $\overline{\sigma}$ are paired off in the summation, since the following diagram commutes 
\[
\begin{CD}
 R(\overline{\sigma}) @<<< R([0<i]) @>R^*(\alpha_i )>> R(\sigma )\\
@VVV @. @VR_*(\alpha_j )VV\\
R([0<k]) @<R^*(d^k_1) << R([0]) @<R_*(d^j_1)<< R([0<j])
\end{CD}
\]
and the corresponding terms become equal with opposite signs. So the equation simplifies to
\begin{eqnarray*}
R^*(d^k_1)\Phi &=& \pi _k+ \sum_{\sigma =[0<k<i<...<j]}(-1)^{|\sigma |-1}( R^*(d^k_1)R_*(d^j_1)\phi _{\sigma}- R^*(d^k_1)R_*(d^j_1)\phi _{d_1\sigma})
\end{eqnarray*}
where the sum runs over the simplices starting with $\{ 0<k\}$.  The 
 maps $\sigma \stackrel{\theta }{\rightarrow} [0<k<i]$, $ \sigma \stackrel{\vartheta }{\rightarrow} [0<k]$ and also the two face maps $[0<i] \stackrel{d_1}{\leftarrow} [0<k<i]\stackrel{d_2}{\rightarrow} [0<k] $ in our indexing category $\mathbf{dn}$ give a commutative diagram
 \[
\begin{CD}
 R([0<i]) @>R^*(d_1)>> R([0<k<i]) @>R^*(\theta )>> R(\sigma )\\
@VVV @. @VR_*(\vartheta )VV\\
R(d_1\sigma ) @>>> R([0<j]) @>R^*(d^k_1)R_*(d^j_1)>> R([0<k]).
\end{CD}
\]
Hence we can write
\begin{eqnarray*}
R^*(d^k_1)\Phi &=& \pi _k+ \sum_{\sigma =[0<k<i<...<j]} (-1)^{|\sigma |-1}R_*(\vartheta )R^*(\theta)\left( R^*(d_2)-R^*(d_1) \right) .
\end{eqnarray*}
Now suppose that $U$ is in the kernel of $\delta ^1\co C^1_{0}\rightarrow C^2_{0}$,  the summation above vanishes since $\left( R^*(d_2)-R^*(d_1) \right)(U)=0$. Therefore $R^*(d^k_1)\Phi (U)=\pi _k(U)$, which implies that $H^1(C^*_{0})=0$. 
\end{proof}

\subsection{Higher limits of the representation ring functor}

Let $G$ be a finite group and $p$ be a prime dividing its order. Let $M_0,...,M_n$ denote the maximal abelian $p$--subgroups of $G$. Consider the contravariant functor $R\co\mathbf{dn}\rightarrow \catAb$ defined by $\sigma \mapsto R(M_\sigma )$ where $M_\sigma=\cap_{i\in \sigma}M_i$ and $R(H)$ is the Grothendieck ring of complex representations of $H$. Note that $\QQ\otimes R$ is a pre-Mackey functor: If $\alpha\co \sigma\rightarrow \sigma'$ set $A=\cap_{i\in \sigma} M_i$ and $B=\cap_{i\in\sigma'}M_i$ then $(\QQ\otimes R^*(\alpha),\QQ\otimes R_*(\alpha))=(\res_{B,A},\frac{1}{|B:A|}\indu_{A,B})$. Hence Theorem \ref{torsion} implies the following.  

\Cor{\label{vanish} If $R\co\mathbf{dn}\rightarrow \catAb$ defined as above then $\ilim{}{s} \QQ\otimes R^*=0$ for $s>0.$}
\Proof{We give an alternative proof which is specific to the representation ring functor. Let $U\co
\catGrp\rightarrow \catSet$ denote the forgetful functor which sends a group to the underlying set. The isomorphism $\CC\otimes R(M_\sigma)\cong \Hom_\catSet(M_\sigma,\CC)=H^0(U(M_\sigma);\CC))$ induces an isomorphism $\ilim{}{s}\CC\otimes R(M_\sigma)\cong \ilim{}{s} H^0(U(M_\sigma);\CC)$. Note that $U(M_\sigma)$ is a finite set of points. Consider the space $\hocolim{\catdn} U(M_\sigma)$ and the associated cohomology spectral sequence 
\[ E_2^{s,t}=\ilim{\catdn}{s}H^t(U(M_\sigma);\CC)\Rightarrow H^{s+t}(\hocolim{\catdn} U(M_\sigma);\CC).\]
Now, the diagram $\catdn\rightarrow \catTop$ defined by $\sigma\mapsto U(M_\sigma)$ is free (\S \ref{sec:app}), consists of inclusions of finite sets. Therefore the natural map $\hocolim{\catdn} U(M_\sigma)\rightarrow \colim{\catdn} U(M_\sigma)=U(G)$ is a homotopy equivalence, note that the latter is a finite set. The spectral sequence collapses at $E_2$--page and gives $\ilim{}{s}H^0(U(M_\sigma);\CC)\cong H^{s}(\hocolim{\catdn} U(M_\sigma);\CC)$ which vanishes for $s>0$. 
}

\section{$K$--theory of $B(2,G)$}\label{sec:ktheory}
In this section, we assume that $G$ is a finite group. 
We study the complex $K$--theory of $B(q,G)$ when $q=2$. Theorem \ref{p-decomposition} gives a decomposition
\[
\tilde{K}^*(B(2,G))\cong \bigoplus_{p||G|} \tilde{K}^*(B(2,G)_p),
\]
of the reduced $K$--theory of $B(2,G)$.
For each such prime $p$ recall the fibration $\hocolim{\nN(q,G)_p} G/P\rightarrow B(q,G)_p \rightarrow BG$ from \S \ref{sec:finite}.
Furthermore, the Borel construction commutes with homotopy colimits and gives a weak equivalence 
\[
B(q,G)_p\simeq (\hocolim{\nN(q,G)_p}G/P)\times_G EG. 
\]
There is the Atiyah--Segal completion theorem \cite{a-s} which relates the equivariant $K$--theory of a $G$--space $X$ to the complex $K$--theory of its Borel construction $X\times_G EG$.

\subsubsection*{Equivariant $K$-theory} Complex equivariant $K$--theory is a $\ZZ /2$--graded cohomology  theory defined for compact spaces using $G$--equivariant complex vector bundles \cite{segal}. Recall that
\[
K^n_G(G/H)\cong \left\lbrace
\begin{array}{cc}
R(H) & \text{ if  } n=0,\\
0 & \text{ if } n=1, 
\end{array} \right.
\]
where $H\subseteq G$ and recall that $R(H)$ denotes the Grothendieck ring of complex representations of $H$. Note that in particular, $K^0_G(\pt )\cong R(G)$ and $K^*_G(X)$ is an $R(G)$--module via the natural map $X\rightarrow \pt$. 

Let $X$ be a compact $G$--space and $I(G)$ denote the kernel of the augmentation map $R(G)\stackrel{\varepsilon}{\rightarrow} \ZZ$.
The Atiyah--Segal completion theorem \cite{a-s} states that $K^*( X\times_G EG )$ is the completion of $K^*_G(X)$ at the augmentation ideal $I(G)$. In particular, taking $X$ to be a point this theorem implies that 
\[
 K^n(BG) \cong
 \left\{ 
 \begin{array}{cc}
R(G)^\wedge & n=0, \\
0& n=1. 
\end{array} 
\right.
\]
The completion $R(G)^\wedge$ can be described using the restriction maps to a Sylow $p$--subgroup for each $p$ dividing the order of the group $G$. Let $I_p(G)$ denote the quotient of $I(G)$ by the kernel of the restriction map $I(G)\rightarrow I(G_{(p)})$ to a Sylow $p$--subgroup. There are isomorphisms of abelian groups
\[
 K^n(BG) \cong
 \left\{ 
 \begin{array}{cc}
\ZZ\oplus \bigoplus_{p|\;|G|} \ZZ_p\otimes I_p(G) & n=0, \\
0& n=1,
\end{array} 
\right.
\]
see \cite{luck}. Note that if $G$ is a nilpotent group, equivalently $G\cong \prod_{p|\;|G|}G_{(p)}$, the restriction map $I(G)\rightarrow I(G_{(p)})$ is surjective and
\begin{eqnarray}\label{nilcomp}
\tilde{K}^0(BG)\cong I(G)^\wedge\cong \bigoplus_{p|\;|G|} \ZZ_p\otimes I(G_{(p)}).
\end{eqnarray}
We also use the fact that the completion of $R(H)$ with respect to $I(H)$ is isomorphic to the completion with respect to $I(G)$ as an $R(G)$--module via the restriction map $R(G)\rightarrow R(H)$.

\subsection{$K$--theory of $B(2,G)$}  The $E_2$--page of the equivariant version of the Bousfield--Kan spectral sequence \cite{lee} for the equivariant $K$--theory of  $\hocolim{\nN(q,G)_p} G/P $ is given by
\begin{equation}\label{E2}
E^{s,t}_2\cong \ilim{\nN(q,G)_p}{s}K^t_G(G/P) 
\cong \left\{
\begin{array}{ll}
\ilim{\nN(q,G)_p}{s}R(P)&  \text{ if $t$ is even, }  \\
0 &\text{ if $t$ is odd, } 
\end{array} \right.
\end{equation}
and the spectral sequence converges to $K^*_G(\hocolim{\nN(q,G)_p} G/P )$ whose completion is $K^*(\hocolim{\nN(q,G)_p} BP)$. We will split off the trivial part of $E_2^{s,t}$  and consider 
$$
\tilde{E}_2^{s,t}\cong \ilim{\nN(q,G)_p}{s}I(P)\;\; \text{ for $t$ even.}
$$
Note that the splitting $R(P)\cong \ZZ\oplus I(P)$ gives $\ilim{\nN(q,G)_p}{s}R(P)\cong \ilim{\nN(q,G)_p}{s}I(P)$ for $s>0$ since the nerve of $\nN(q,G)_p$ is contractible. The following result describes the terms of this spectral sequence for the case $q=2$.

\begin{lem}\label{E2tor}
Let  $R\co\nN(2,G)_p\rightarrow \mathbf{Ab}$ be the representation ring functor defined by $P\mapsto R(P)$ then  $\ilim{}{s} R$ are torsion groups for $s>0$. Furthermore, $\ilim{}{0}R\cong \ZZ^{n_p+1}$ where $n_p$ is the number of (non-identity) elements of order a power of $p$ in $G$.
\end{lem}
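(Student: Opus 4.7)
The plan is to compute the complex-linearized higher limits directly by reducing to the cohomology of a discrete space, in the spirit of the alternative proof of Corollary~\ref{vanish}, and then upgrade to integral statements using that the indexing poset is finite. The key identification is $\CC\otimes R(P)\cong \Hom_\catSet(U(P),\CC)=H^0(U(P);\CC)$ coming from character evaluation, valid because every $P\in\nN(2,G)_p$ is abelian.

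Applying this termwise and using the Bousfield--Kan cohomology spectral sequence for the homotopy colimit of the diagram $P\mapsto U(P)$ of discrete sets, I would obtain
\[
\ilim{\nN(2,G)_p}{s}\CC\otimes R\;\cong\;\ilim{\nN(2,G)_p}{s}H^0(U(P);\CC)\;\cong\; H^s(\hocolim{\nN(2,G)_p}U(P);\CC).
\]
The diagram $P\mapsto U(P)$ consists of inclusions of finite sets and is free (\S\ref{sec:app}), so the natural map $\hocolim{\nN(2,G)_p}U(P)\to \colim{\nN(2,G)_p}U(P)$ is a weak equivalence. The colimit is the union $\bigcup_P U(P)$, which equals the set $G_p$ of $p$-power-order elements of $G$, since every such element generates a cyclic (hence abelian) $p$-subgroup. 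As $G_p$ is a discrete set of cardinality $n_p+1$, the right-hand side is $\CC^{n_p+1}$ for $s=0$ and vanishes for $s>0$.

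Since $\nN(2,G)_p$ is finite and each $R(P)$ is finitely generated free abelian, the complex of Lemma~\ref{lim-complex} computing $\ilim{}{*}R$ is a finite complex of finitely generated free abelian groups; flatness of $\CC$ therefore yields $\CC\otimes\ilim{}{s}R\cong\ilim{}{s}\CC\otimes R$. For $s>0$ this forces $\ilim{}{s}R$ to be torsion. For $s=0$, $\ilim{}{0}R$ embeds as a subgroup of the finitely generated free abelian group $\prod_P R(P)$ and so is itself free of finite rank, which must equal $n_p+1$ by the complexified computation; hence $\ilim{}{0}R\cong\ZZ^{n_p+1}$.

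The main obstacle is checking freeness of the diagram $P\mapsto U(P)$ (so that the hocolim-to-colim map is a weak equivalence) and identifying the colimit with $G_p$; both are essentially formal, but the first leans on the definition recalled in the appendix and the second uses that every $p$-power element of $G$ generates an abelian $p$-subgroup.
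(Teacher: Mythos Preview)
Your argument is correct. It differs from the paper's proof mainly in that you work directly over the poset $\nN(2,G)_p$, whereas the paper first passes to the simplex category $\mathbf{dn}$ via the adjunction of Proposition~\ref{properties} (sending $P$ to the intersection of the maximal abelian $p$--subgroups containing it) and then invokes Corollary~\ref{vanish}. Your route is essentially the alternative proof of Corollary~\ref{vanish} transplanted to $\nN(2,G)_p$: the identification $\CC\otimes R(P)\cong H^0(U(P);\CC)$ together with the Bousfield--Kan spectral sequence for $\hocolim{}U(P)$. The reason this works without the detour through $\mathbf{dn}$ is that freeness of $P\mapsto U(P)$ already holds over $\nN(2,G)_p$: each $g\in G_p$ has a unique minimal object containing it, namely the cyclic group $\langle g\rangle\in\nN(2,G)_p$, so the diagram decomposes as $\coprod_{g\in G_p}\Hom_{\nN(2,G)_p}(\langle g\rangle,-)$. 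One small imprecision: the cochain complex of Lemma~\ref{lim-complex} is not literally finite, but each $C^n$ is finitely generated free (the poset is finite) and $\CC$ is flat over $\ZZ$, which is all you need for $\CC\otimes\ilim{}{s}R\cong\ilim{}{s}(\CC\otimes R)$. Your approach is more self-contained; the paper's reduction to $\mathbf{dn}$ has the advantage of plugging into the general machinery of Theorem~\ref{torsion} on pre-Mackey functors, which is of independent interest.
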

\Proof{
Let $M_i$ for $0\leq i \leq n$ denote the maximal abelian $p$--subgroups of $G$. Define a functor  $g\co\mathbf{dn}\rightarrow \nN(2,G)_p $ by sending a simplex $[i_0<i_1<...<i_k]$ to the intersection  $\bigcap_{j=0}^{k} M_{i_j} $. Then $g $ has a left adjoint, namely the functor $P \mapsto \bigcap_{M_i : P\subset M_i } M_i$ sending an abelian $p$--subgroup to the intersection of maximal abelian $p$--subgroups containing it. By  Proposition \ref{properties} we can replace the indexing category $\nN(2,G)_p$ by $\mathbf{dn}$ and calculate the higher limits over the category $\mathbf{dn}$. Then the first part of the theorem follows from Corollary \ref{vanish}. 

For the second part note that $\ilim{}{0} R=\ilim{}{} R $   is a submodule of a free $\ZZ$--module, hence it is free. Then it suffices to calculate its rank. Tensoring with $\CC$ gives an isomorphism 
\[\CC\otimes \ilim{\nN(2,G)_p}{0}R(P)\cong \Hom_\catSet(\cup_{\nN(2,G)_p} P,\CC).\]
 Recall that $\nN(2,G)_p$ is the collection of abelian $p$--subgroups of $G$. The result follows from the identity $|\cup_{\nN(2,G)_p} P|=n_p+1$ where $n_p$ is the number of (non-identity) elements of order a power of $p$ in $G$.
}

\begin{thm}\label{K(B(2,G))}
 There is an isomorphism
\[
\QQ \otimes K^i(B(2,G))\cong
\left\lbrace
\begin{array}{ll}
\QQ \oplus \bigoplus_{p\mid |G|} \QQ_p^{n_p} & \text{ if $i=0$, }\\
0 & \text{ if $i=1$, }
\end{array} \right.
\]
where $n_p$ is the number of (non-identity) elements of order a power of $p$ in $G$.
\end{thm}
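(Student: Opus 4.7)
The plan is to combine Theorem~\ref{p-decomposition} with the Atiyah--Segal completion theorem and the equivariant Bousfield--Kan spectral sequence~(\ref{E2}). Theorem~\ref{p-decomposition} reduces the computation to $\QQ\otimes\tilde K^*(B(2,G)_p)$ for each prime $p\mid |G|$ separately. The Borel construction identification $B(2,G)_p\simeq(\hocolim_{\nN(2,G)_p} G/P)\times_G EG$ recalled above, together with Atiyah--Segal, expresses $K^*(B(2,G)_p)$ as the $I(G)$-adic completion of the equivariant $K$-group $K^*_G(\hocolim_{\nN(2,G)_p} G/P)$.

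To compute the latter I use the spectral sequence (\ref{E2}), whose $E_2$-page sits in even columns and equals $\ilim^s R(P)$. Lemma~\ref{E2tor} shows these higher limits are torsion for $s>0$ while $\ilim^0 R(P)\cong \ZZ^{n_p+1}$. Tensoring with $\QQ$ therefore kills everything above the bottom row, leaving $\QQ\otimes K^0_G(\hocolim G/P)\cong \QQ^{n_p+1}$ and $\QQ\otimes K^1_G(\hocolim G/P)=0$. I then split $\ilim R(P)\cong \ZZ\oplus \ilim I(P)$ via the augmentation: the trivial $\ZZ$ is annihilated by $I(G)$ and contributes a $\QQ$ to $K^0(B(2,G)_p)$ unchanged by completion, while the reduced summand $\ilim I(P)$ is assembled from modules over $p$-subgroups and so its $I(G)$-adic completion coincides with its $p$-adic completion. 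By~(\ref{nilcomp}) applied term by term, this completion is rationally $\QQ_p\otimes_{\ZZ}\ilim I(P)\cong \QQ_p^{n_p}$, yielding $\QQ\otimes \tilde K^0(B(2,G)_p)\cong \QQ_p^{n_p}$ and $\QQ\otimes \tilde K^1(B(2,G)_p)=0$. Assembling over primes via Theorem~\ref{p-decomposition} then gives the stated formula.

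The main obstacle is justifying that the $I(G)$-adic completion commutes with the inverse limit in the previous paragraph, since completion is only left exact in general. The cleanest route is to use Propositions~\ref{properties} and~\ref{cech} to realize $\ilim I(P)$ as the $H^0$ of an explicit finite cochain complex whose terms are products of $I(M_\sigma)$'s indexed by simplices $\sigma\in\catdn$ of intersections of maximal abelian $p$-subgroups. Flatness of $\ZZ_p\otimes-$ on finitely generated abelian groups then allows the completion to pass through the complex termwise; the higher torsion cohomology groups produced by this process are precisely the higher limits controlled by Lemma~\ref{E2tor}, which die upon tensoring with $\QQ$ and therefore do not contaminate the rank count of the completion in degree zero.
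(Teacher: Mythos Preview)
Your proposal is correct and follows essentially the same route as the paper's proof: reduce to a single prime via Theorem~\ref{p-decomposition}, apply Atiyah--Segal and the spectral sequence~(\ref{E2}), invoke Lemma~\ref{E2tor} for the torsion of the higher limits, and identify the $I(G)$-adic completion termwise with $p$-adic completion via~(\ref{nilcomp}). The only minor difference is that the paper justifies the commutation of completion with the spectral sequence directly from the fact that $R(G)$ is Noetherian (so $-\otimes_{R(G)}R(G)^\wedge$ is exact on finitely generated modules and hence $\tilde E_r^{s,t}\otimes_{I(G)}I(G)^\wedge\cong\tilde E_r^{s,t}\otimes_\ZZ\ZZ_p$), whereas you reach the same conclusion through the explicit finite complex of Proposition~\ref{cech} and flatness of $\ZZ_p$.
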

\Proof{  
By the decomposition in Theorem \ref{p-decomposition} we  work at a fixed prime $p$ dividing the order of $G$. Then $\tilde{K}^*(B(2,G)_p)$ is isomorphic to the completion of $ \tilde{K}^*_G(\hocolim{\nN(2,G)_p} G/P)$ at the augmentation ideal $I(G)$ by the Atiyah--Segal completion theorem.  Since $R(G)$ is a Noetherian ring the completion $-\otimes_{R(G)}R(G)^\wedge$ is exact on finitely generated $R(G)$--modules, hence commutes with taking homology. Moreover,  (\ref{nilcomp}) gives
$$
R(P)\otimes_{R(G)}R(G)^\wedge \cong R(P)^\wedge \cong \ZZ\oplus I(P)\otimes\ZZ_p .
$$
This isomorphism induces 
$$
\tilde{E}_r^{s,t}\otimes_{I(G)}I(G)^\wedge \cong \tilde{E}_r^{s,t}\otimes_{\ZZ} \ZZ_p\;\; \text{ for $r\geq 2$,}
$$
which gives an isomorphism between the abutments. For $t$ even and $s>0$, $\tilde{E}_2^{s,t}$ are torsion by Lemma \ref{E2tor}. After tensoring with $\QQ$, the spectral sequence  collapses onto the vertical axis at the $E_2$--page and $\QQ\otimes\tilde{K}^0(B(2,G)_p)\cong  \QQ\otimes \ZZ_p^{n_p}$ whereas $\QQ \otimes K^1(B(2,G)_p)$ vanishes. 
}

\section{Colimits of abelian groups}\label{sec:colimits}

The fundamental groups of $B(2,G)$ and $B(2,G)_p$ are isomorphic to the colimits of the  collection of abelian subgroups  and abelian $p$--subgroups of $G$, respectively. Therefore it is natural to study the colimit of a collection of abelian groups in greater generality to determine the homotopy properties of these spaces.

Let $\aA$ denote  a collection of abelian groups closed under taking subgroups. We  consider the colimit of the groups in $\aA$. This can be constructed as a quotient of the free product $\coprod$  of the groups in the collection
\[
\colim{\aA} A\cong  (\coprod_{A\in \aA} A) /\sim
\]
by the normal subgroup generated by the relations $b\sim f(b)$ where $b\in B$ and $f\co B\rightarrow A$ runs over the morphisms in $\aA$. Let $T$ be a finite abelian group, define the number 
\[
d(T)=\sum_{p|\; |T|} \rank(T_{(p)})
\]
where $T_{(p)}$ denotes the Sylow $p$--subgroup of $T$.  
For $r>0$ and a collection of finite abelian groups $\aA$, define a sub-collection 
\[
\aA_r=\lbrace A\in \aA|\; d(A)\leq r \rbrace.
\] 
The main result of this section is the following isomorphism of colimits, which reduces the collection $\aA$ to the sub-collection $\aA_2$. Note that if $\aA$  consists of elementary abelian $p$--groups then $\aA_2$ is the sub-collection of groups of rank at most $2$.   

\begin{thm}\label{rank2}
The natural map
\[
\colim{\aA_2} A \rightarrow \colim{\aA} A
\]
induced by the inclusion  map $\aA_2\rightarrow \aA$ is an isomorphism.
\end{thm}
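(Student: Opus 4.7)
The plan is to construct an explicit inverse $\phi\co \colim{\aA}A \to \colim{\aA_2}A$ to the natural map $\iota$. The guiding observation is that for any $A \in \aA$ and any two elements $x \in A_{(p)}$, $y \in A_{(q)}$, the subgroup $\Span{x,y}\subseteq A$ lies in $\aA_2$: if $p=q$ it is a $2$-generated abelian $p$-group, hence of $p$-rank at most $2$; while if $p\ne q$ it equals $\Span{x}\times\Span{y}$, a product of two cyclic groups of coprime prime-power orders, with $d=2$. Since $\aA$ is closed under subgroups, these subgroups are indeed in $\aA$, and so in $\aA_2$.

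To define $\phi$, for each $A \in \aA$ write $a \in A$ uniquely as $a = \sum_p a_p$ with $a_p \in A_{(p)}$. The cyclic subgroup $\Span{a_p}$ lies in $\aA_2$, so $a_p$ has a well-defined image $[a_p] \in \colim{\aA_2}A$, and I set $\phi_A(a) = \prod_p [a_p]$ with some fixed ordering of primes. The order is immaterial: for $p \ne q$, the classes $[a_p]$ and $[a_q]$ both factor through $\Span{a_p, a_q} \in \aA_2$, where they commute, so they commute in $\colim{\aA_2}A$. To see $\phi_A$ is a homomorphism, observe $(a+b)_p = a_p + b_p$ and $\Span{a_p, b_p} \in \aA_2$, so $[a_p + b_p] = [a_p] + [b_p]$ in $\colim{\aA_2}A$; combined with the cross-prime commutativity this gives $\phi_A(a+b) = \phi_A(a)\phi_A(b)$. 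Compatibility with the inclusions in $\aA$ is immediate since the primary decomposition is preserved under inclusions, so the $\phi_A$ assemble into a morphism $\phi\co \colim{\aA}A \to \colim{\aA_2}A$.

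Finally, for $a \in A \in \aA$, the relation $a = \sum_p a_p$ in $A$ descends under the tautological map $A \to \colim{\aA}A$ to $[a] = \prod_p [a_p]$ in $\colim{\aA}A$, which shows $\iota \circ \phi = \mathrm{id}$; the analogous relation in $\colim{\aA_2}A$, valid whenever $A \in \aA_2$, yields $\phi \circ \iota = \mathrm{id}$ on the generators of $\colim{\aA_2}A$. The main subtlety to watch is that the colimits are taken in the category of groups, not abelian groups, so each commutativity or additivity relation used along the way must be justified by exhibiting a specific member of $\aA_2$ that contains the relevant elements; the primary structure of finite abelian groups together with the rank observation in the first paragraph makes every such instance automatic.
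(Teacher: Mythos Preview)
Your argument is correct and takes a genuinely different route from the paper. The paper proceeds by filtration: it interpolates between $\aA_2$ and $\aA$ through the chain $\aA_2\subset\aA_3\subset\cdots\subset\aA$, and at each stage invokes Corollary~\ref{subcolim}, which says that for a finite abelian group $T$ with $d(T)\geq 3$ the natural map $\colim{\aA(T)}A\to T$ is an isomorphism. That corollary in turn rests on the homotopy type of the coset poset $\cC(T)$ (Proposition~\ref{cosetposet}): when $d(T)\geq 3$ the fibre $\cC(T)\simeq\bigvee S^{d(T)-1}$ is simply connected, so the long exact sequence of the fibration~(\ref{fibration2}) forces the isomorphism.

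Your approach bypasses all of this topology by writing down an explicit inverse via the primary decomposition. The key elementary fact you exploit is that any two prime-power-order elements of a finite abelian group generate a subgroup with $d\leq 2$: same prime gives a $2$-generated $p$-group, different primes give a product of two cyclic prime-power groups. This lets every relation you need (commutativity across primes, additivity within a prime) be witnessed inside a member of $\aA_2$, and the colimit-in-groups subtlety you flag is exactly the right thing to watch. Your proof is shorter and self-contained; the paper's route, on the other hand, yields Corollary~\ref{subcolim} as a byproduct, which is of independent interest and fits the homotopy-colimit theme of the surrounding sections.
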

 
The proof follows from the homotopy properties of a complex constructed from the cosets of proper subgroups of a group. We review some background first.

\subsubsection*{The coset poset} Consider the poset $\lbrace xH|\; H\subsetneq G \rbrace$ consisting of cosets of proper subgroups of $G$ ordered by inclusion. The associated complex is denoted by $\cC(G)$ and it is studied in \cite{brown}. This complex can be identified as a homotopy colimit
\[
\cC(G)\simeq \hocolim{\set{H\subsetneq G}}G/H.
\]
The identification follows from the description of this homotopy colimit as the nerve of the transport category of the poset $\set{H\subsetneq G}$ which is precisely the nerve of the poset $\lbrace xH|\; H\subsetneq G \rbrace$.

 When $G$ is solvable, $\cC(G)$ has the  homotopy type of a bouquet of spheres. To make this statement precise we need a definition from group theory.
 
 \Def{\rm{ A \textit{chief} series of a group $G$ is a series of  normal subgroups
\[
1=N_0\subset N_1\subset ...\subset N_k=G
\]
for which each factor $N_{i+1}/N_i$ is a  minimal (nontrivial) normal subgroup of $G/N_i$. 
}}
\begin{pro}\cite[Proposition 11]{brown}\label{cosetposet}
Suppose that $G$ is a solvable finite group and 
\[
1=N_0\subset N_1\subset ...\subset N_k=G
\]
be a chief series then
\[
\mathcal{C}(G)\simeq \bigvee^n S^{d-1}
\]
for some $n>0$, where $d$ is the number of indices $i=1,2,...,k$ such that $N_{i}/N_{i-1}$ has a complement in $G/N_{i-1}$.
\end{pro}
We are  interested only in the dimensions of the spheres. 

\subsubsection*{Coset poset of abelian groups} The number $d$ in Proposition \ref{cosetposet} for an abelian group $T$  is given by $d(T)$ as introduced at the beginning of this section:
\[
d(T)=\sum_{p|\;|T|} d(T_{(p)})=\sum_{p|\; |T|} \rank(T_{(p)})
\]
where $T_{(p)}$ denotes the Sylow $p$--subgroup of $T$.

For a finite group $G$, we denote the collection of \textit{proper} abelian subgroups by $\aA(G)$. (It is equal to $\nN(2,G)$ when $G$ is non-abelian.)

\begin{pro}
Let $T$ be a finite abelian group. There is a fibration 
\begin{eqnarray}\label{fibration2}
\bigvee^n S^{d(T)-1} \rightarrow \hocolim{\aA(T)} BA \rightarrow BT
\end{eqnarray}
for some  $n>0$.
\end{pro}
\begin{proof}
The fiber is the coset poset $  \mathcal{C}(T)\simeq\hocolim{\aA(T)} T/A $  and it  is homotopy equivalent to $\bigvee^n S^{d(T)-1}$ by Proposition \ref{cosetposet}.
\end{proof}

The exact sequence of homotopy groups associated to the fibration (\ref{fibration2}) implies the following.

\begin{cor}\label{subcolim}
 When $d(T)\geq 3$, the natural map
\[
\beta\co\colim{\aA(T)} A\rightarrow T 
\]
is an isomorphism.
\end{cor}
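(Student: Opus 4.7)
The plan is to read off the statement directly from the long exact sequence of homotopy groups of the fibration \eqref{fibration2}, using that the hypothesis $d(T)\geq 3$ makes the fiber sufficiently highly connected.

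First I would recall that $\pi_1(\hocolim_{\aA(T)} BA)\cong \colim_{\aA(T)} A$ by the general description of the fundamental group of a homotopy colimit of classifying spaces (as cited earlier via \cite[Corollary 5.1]{far}), and that the natural map to $\pi_1(BT)=T$ induced by the projection in \eqref{fibration2} is precisely $\beta$, since the projection is the map of homotopy colimits induced by the inclusions $A\hookrightarrow T$.

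Next I would apply the preceding proposition to identify the homotopy fiber with $\bigvee^n S^{d(T)-1}$. Under the hypothesis $d(T)\geq 3$, we have $d(T)-1\geq 2$, so this wedge is simply connected (and in particular connected). The exact sequence
\[
\pi_1\bigl(\bigvee^n S^{d(T)-1}\bigr)\longrightarrow \pi_1(\hocolim_{\aA(T)} BA)\stackrel{\beta}{\longrightarrow} \pi_1(BT)\longrightarrow \pi_0\bigl(\bigvee^n S^{d(T)-1}\bigr)
\]
then has both outer terms trivial, so $\beta$ is an isomorphism.

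There is no real obstacle here: the only thing to keep straight is the bookkeeping that the map appearing in the homotopy exact sequence is literally $\beta$, which follows from the construction of the fibration \eqref{fibration2} as the homotopy colimit of the fibrations $T/A\to BA\to BT$ (compare \eqref{eq:fibration} and \eqref{eq:homotopyex} in \S\ref{sec:finite}). All the work has been done by the preceding proposition and by Brown's computation of the coset poset in Proposition \ref{cosetposet}.
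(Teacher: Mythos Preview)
Your proposal is correct and follows exactly the approach the paper intends: the corollary is stated immediately after the fibration \eqref{fibration2} with the remark that the exact sequence of homotopy groups implies it, and you have simply written out that exact sequence and observed that $d(T)\geq 3$ forces the fiber to be simply connected. Your added care in identifying the map in the exact sequence with $\beta$ via \eqref{eq:fibration}--\eqref{eq:homotopyex} is a welcome clarification.
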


\begin{proof}[\textbf{Proof of Theorem \ref{rank2}}]
 We have a sequence of inclusions
\[
\aA_2\rightarrow \aA_3 \rightarrow ...\rightarrow\aA_i\rightarrow \aA_{i+1}\rightarrow ... \rightarrow \aA_r=\aA
\]
for some $r>2$. For each $2\leq i < r$, one can  filter these sets further as follows 
\[
\aA_i=\aA_{i,0} \subset \aA_{i,1}\subset ... \subset \aA_{i,j}\subset... \subset \aA_{i,t_i}=\aA_{i+1} 
\]
where $\aA_{i,j}=\set{B\in \aA_{i+1}|\; \aA(B)\subseteq \aA_{i,j-1}}$ for $0< j \leq t_i $.

For $2\leq i< r$ and $0\leq j<t_i$, each of the maps
\[
\colim{\aA_{i,j}} A\rightarrow \colim{\aA_{i,j+1}} A
\]
has an inverse induced by the compositions: Let $Q\in \aA_{i,j+1}-\aA_{i,j} $ 
\[
Q \stackrel{\beta^{-1}}{\rightarrow} \colim{\aA(Q)} A \rightarrow \colim{\aA_{i,j}} A
\]
where $\beta$ is the map in Corollary \ref{subcolim} and the second map is induced by the inclusion $\aA(Q)\rightarrow \aA_{i,j}$. Hence each inclusion map $\aA_{i,j}\rightarrow \aA_{i,j+1}$ induces an isomorphism on the colimits.
\end{proof}

\section{Colimit of abelian subgroups of extraspecial $2$--groups}\label{sec:colimext}
\label{sec:colim extra}

The colimit of abelian subgroups of a finite group usually turns out to be an infinite group. Simplest examples are amalgamations of the maximal abelian subgroups along the center of the group, see \S \ref{sec:examples} for such examples of groups. The smallest example which diverges from this pattern is the extraspecial groups of order $32$. In this section, we consider colimits of abelian subgroups of extraspecial $2$--groups and prove that it is a finite group. 

 An extraspecial $2$--group of order $2^{2n+1}$  fits into an extension of the form
\[
\ZZ/2 \rightarrow G_n \stackrel{\pi}{\rightarrow} (\ZZ/2)^{2n},
\] 
where $\ZZ/2$ is  the center $Z(G_n)$. There are two, up to isomorphism, extraspecial $2$--groups $G_n^+$ and $G_n^-$ for a fixed $n$. The first one is isomorphic to a central product of $n$ copies of $D_8$ the dihedral group of order $8$, for the second one replace one copy of $D_8$ by $Q_8$ the quaternion group of order $8$.
 
 We want to compute the colimit of abelian subgroups of $G_n$ and we  denote the collection of abelian subgroups by $\aA(G_n)$. Recall that there is a natural surjective map 
\[
\psi\co \colim{\aA(G_n)} A\rightarrow G_n
\]
induced by the inclusions $A\rightarrow G_n$. 

In the case of $G_1$, which is either $Q_8$ or $D_8$, the colimit is an amalgamated product of the maximal abelian subgroups along the center. We consider the case $n\geq 2$.
There is an exact sequence
\begin{equation}\label{eq:quotient}
1\rightarrow Z(G_n)\rightarrow \colim{\aA(G_n)} A \rightarrow \colim{\aA(G_n)} A/Z(G_n)\rightarrow 1
\end{equation}
and a commutative diagram
 \[
\begin{CD}
\colim{\aA(G_n)} A @>\psi>> G_n\\
@VVV @VVV\\
\colim{\aA(G_n)} A/Z(G_n) @>\psi'>> G_n/Z(G_n),
\end{CD} 
\]
where $\psi$ and $\psi'$ are surjective.

It is easier to study $\colim{\aA(G_n)} A/Z(G_n)$, since we can identify this group as a colimit over the polar space of $Sp_{2n}(2)$, see for example \cite{smith} for a discussion of polar spaces. Let us explain this identification.

\subsubsection*{Polar spaces} Regard $(\ZZ/2)^{2n}$ as a vector space over $\ZZ/2=\{ 0,1\}$ and denote it by $V^{2n}$. There is a non-degenerate symplectic bilinear form $V^{2n}\times V^{2n}\rightarrow \ZZ/2$ defined by
\[
(v,w)=[\pi^{-1}(v),\pi^{-1}(w)]
\]
where $[-,-]$ denotes the commutator of given two elements in $G_n$. A subgroup $A$ of $G$ is abelian if and only if $\pi (A)$ is an isotropic subspace of $V^{2n}$ with respect to the bilinear form $(-,-)$, that is $\pi(A)\subset \pi(A)^\bot$. One can order these isotropic subspaces by inclusion. 
\Def{\label{polar space}\rm{ The \textit{polar space of $Sp_{2n}(2)$} is the poset of (non-trivial) isotropic subspaces of $V^{2n}$. We denote this poset by $\sS_{n}$. 
}}
Let $\{ e_i\}_{1\leq i \leq 2n}$ be a basis for $V^{2n}$. It decomposes as the orthogonal direct sum of $2$--dimensional subspaces
\begin{equation}\label{dec}
V^{2n}=\bigoplus_{i=1}^{n} \langle e_{2i-1},e_{2i}\rangle,
\end{equation}
where
\[ 
(e_j,e_k)=\left\lbrace
\begin{array}{cc}
1 & \text{if } \set{j,k}=\set{2i-1,2i}, \\
0 & \text{otherwise}.
\end{array} \right.
\]
For an abelian subgroup $A$ of $G_n$, the assignment $A\mapsto \pi(A)$ defines a surjective map of posets  $\aA(G_n)\rightarrow \sS_n$ which induces an isomorphism
\[
\colim{\aA(G_n)} A/Z(G_n) \rightarrow \colim{\sS_n} S.
\]
Note that we use $\sS_n$  as an indexing category, the colimit is still in \textit{the category of groups}. 

Let us introduce some notation. We denote the multiplication in the image of $\iota_S\co S\rightarrow \colim{\sS_n}S$ by $\iota_S (s_1+s_2)=\iota_S (s_1)\iota_S(s_2)$. In particular, for $S=\Span{e_I}$ where  $e_I=\sum_{i\in I}e_i$ for some non-empty subset $I$ of $ \{1,2,...,2n\}$, we simply write
\[ g_I=\iota_{\Span{e_I}}(e_I) \]
to denote the image. 

Let $\sS_{n,r}$ denote the subposet of $\sS_n$ which consists of subspaces of dimension at most $r$.  Theorem \ref{rank2} implies  that the inclusion $\sS_{n,2}\rightarrow \sS_n$ induces an isomorphism
\[
\colim{\sS_{n,2}}S\rightarrow \colim{\sS_n}S.
\]
 Then a presentation of the colimit can be given as 
\begin{align}\label{presentation}
\colim{\sS_{n}}S =<g_I,\; I\subset \{1,2,...,2n\} |\; g_I^2=1,\;  [g_I,g_J]=1 \;\Leftrightarrow \; (e_I,e_J)=0,\nonumber \\
g_Ig_J=g_K\; \Leftrightarrow \; e_I+e_J=e_K
 >.
\end{align}
Note that this group is generated by the images $g_I=\iota_{\Span{e_I}}(e_I)$ of  $1$--dimensional subspaces  $\Span{e_I}\in \sS_{n,1}$. We will find a smaller collection of subspaces which generates this group. 

For a $1$--dimensional subspace $\langle e_I\rangle\in \sS_{n,1}$ , define
\[
\lL(e_I)=\lbrace W\in \sS_{n,2}|\; \Dim(W)=2 \text{ and } e_I\in W  \rbrace
\]
and  for a sub-collection $\tT\subset \sS_{n,2}$ of $2$--dimensional spaces  let 
\[
\pP(\tT)=\lbrace \Span{w}\in \sS_{n,1}|\;  w\subset X \text{ for some } X\in \tT \rbrace .
\]
It is possible to determine the cardinalities of these sets: $|\lL(e_I)|=2^n-1$ and $|\pP(\lbrace W\rbrace)|=3$ for any $2$--dimensional subspace $W\in \sS_{n,2}$. 

\Def{\rm{
Let $\qQ\subset \sS_{n,1}$ and $ \Span{w}\in \sS_{n,1} $, we say $\Span{w}$ is connected to $\qQ$ if there exists a $W\in \sS_{n,2}$ of dimension $2$ such that $w\in W$ and $\pP(\set{W})-\lbrace \Span{w} \rbrace \subset \qQ$. We also say $\Span{w}\subset W$ is connected to $\qQ$ if the other two elements in  $\pP(\set{W})-\lbrace \Span{w}\rbrace$ are connected to $\qQ$.
}}

\begin{lem}
Consider the poset $\pP_1=\pP(\lL(e_1))\cup \lbrace \langle e_2 \rangle\rbrace$, then every element of $\sS_{n,1}$ is connected to $\pP_1$.
\end{lem}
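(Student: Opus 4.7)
My plan is first to identify $\pP_1$ concretely, then to decompose an arbitrary nonzero $v\in V^{2n}$ using the splitting (\ref{dec}) and treat each case in turn. Throughout, write $U=\langle e_3,\ldots,e_{2n}\rangle$; the symplectic form on $V^{2n}$ restricts to a nondegenerate form on $U$ (the orthogonal sum of $n-1$ hyperbolic planes), and this nondegeneracy is the main technical lever.

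A $2$--dimensional isotropic subspace containing $e_1$ has the form $\langle e_1,w\rangle$ with $w\in e_1^\perp\setminus\langle e_1\rangle$; since $n\geq 2$ guarantees $\lL(e_1)\neq\emptyset$, one finds
\[
\pP_1=\{\langle v\rangle : 0\neq v\in e_1^\perp\}\cup\{\langle e_2\rangle\}.
\]
Write $v=c_1 e_1+c_2 e_2+u$ with $u\in U$. If $c_2=0$ then $v\in e_1^\perp$ and $\langle v\rangle\in\pP_1$ already. If $c_2=1$, $c_1=0$ and $u=0$ then $v=e_2\in\pP_1$; if instead $u\neq 0$, the plane $W=\langle e_2+u,u\rangle$ is $2$--dimensional and isotropic (because $(e_2,u)=0$ and $U$ is totally isotropic), and its other two $1$--dimensional subspaces, $\langle u\rangle$ and $\langle e_2\rangle$, both lie in $\pP_1$; so $\langle v\rangle$ is directly connected.

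The substantive case is $c_1=c_2=1$, $v=e_1+e_2+u$. When $u\neq 0$, nondegeneracy of the form on $U$ yields $u'\in U$ with $(u,u')=1$; set $W=\langle v,e_1+u'\rangle$. Then $W$ is $2$--dimensional (as $e_1+u'$ has $e_2$--coordinate zero while $v$ has $e_2$--coordinate one) and isotropic by
\[
(v,e_1+u')=(e_2,e_1)+(u,u')=1+1=0.
\]
The remaining two $1$--dimensional subspaces of $W$ are $\langle e_1+u'\rangle\in\pP_1$ (since $(e_1+u',e_1)=0$) and $\langle e_2+(u+u')\rangle$, which falls into the previous paragraph once one notes $u+u'\neq 0$ (otherwise $(u,u)=1$, contradicting total isotropy of $U$). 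The boundary subcase $u=0$ is treated by one extra step: $W=\langle e_1+e_2,e_3\rangle$ is a $2$--dimensional isotropic plane whose other two $1$--dimensional subspaces are $\langle e_3\rangle\in\pP_1$ and $\langle e_1+e_2+e_3\rangle$, the latter an instance of the $u\neq 0$ subcase just handled.

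The only real obstacle is the ``diagonal'' case $v=e_1+e_2+u$, since these $\langle v\rangle$ lie neither in $e_1^\perp$ nor equal $\langle e_2\rangle$ and genuinely force the recursive clause of the definition of \emph{connected}. The argument resolves it by producing the witness $u'\in U$ with $(u,u')=1$ when $u\neq 0$, and by bootstrapping through $e_3$ in the degenerate boundary case $u=0$, where the standing hypothesis $n\geq 2$ is essential.
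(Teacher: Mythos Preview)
Your argument is correct and organized the same way as the paper's: both split $\sS_{n,1}$ according to the projection to $\langle e_1,e_2\rangle$ and handle the fibres over $0$, $e_2$, and $e_1+e_2$ in turn, with the first two cases treated identically. The one substantive difference is the diagonal case $v=e_1+e_2+u$ with $u\neq 0$: the paper first shows that $\langle e_1+e_2\rangle$ is connected (using the explicit chain through $\langle e_1+e_2+e_3+e_4\rangle$ and the plane $\langle e_1+e_3,e_2+e_4\rangle$) and then connects every $\langle e_1+e_2+u\rangle$ via the isotropic plane $\langle e_1+e_2,u\rangle$, whereas you handle each such $v$ directly by producing a symplectic dual $u'\in U$ with $(u,u')=1$ and using $\langle v,e_1+u'\rangle$. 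Your route is more uniform and makes the role of nondegeneracy on $U$ explicit; the paper's route is slightly shorter once the single base case $\langle e_1+e_2\rangle$ is in hand.

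One terminological slip to fix: you twice refer to $U=\langle e_3,\ldots,e_{2n}\rangle$ as ``totally isotropic'', but $U$ carries a \emph{nondegenerate} symplectic form (you use exactly this to find $u'$), so it is certainly not totally isotropic. What you actually need---and what holds---is that the form is alternating, so $(w,w)=0$ for every $w$; this suffices to make $\langle e_2+u,u\rangle$ isotropic and to rule out $u+u'=0$. The mathematics is unaffected, but the wording should be corrected.
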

\begin{proof}
Identify the polar space of $V^{2n}/\langle e_1,e_2\rangle$ with $\sS_{n-1}$ and fix $I=\lbrace 1,2\rbrace$. Then by (\ref{dec}) we have 
\[
\begin{array}{ccc}
\pP(\lL(e_1))&=&\sS_{n-1,1} \sqcup \lbrace \langle e_1\rangle\rbrace\sqcup e_1\sS_{n-1,1},\\
\pP(\lL(e_2))&=&\sS_{n-1,1} \sqcup \lbrace \langle e_2\rangle\rbrace\sqcup e_2\sS_{n-1,1},\\
\pP(\lL(e_I))&=&\sS_{n-1,1} \sqcup \lbrace \langle e_I\rangle\rbrace\sqcup e_I\sS_{n-1,1},
\end{array}
\]
where the notation $w\sS_{n-1,1}$ means the set $\lbrace \langle w+v\rangle|\; \langle v\rangle\in \sS_{n-1,1}\rbrace$. Also observe that
\[
\sS_{n,1}=\pP(\lL(e_1)) \cup \pP(\lL(e_2)) \cup\pP(\lL(e_I)).
\]

Every element $\langle v\rangle$ of $e_2\sS_{n-1,1}$ is connected to $\pP_1$ via the space $\langle e_2,v\rangle$. Therefore every element of $\pP(\lL(e_2))$ is connected to $\pP_1$.

To see that every element of $\pP(\lL(e_I))$ is connected to $\pP_1$, it is sufficient to show that $\langle e_I\rangle$ is connected to $\pP_1$. The sequence of spaces
\[ \langle e_1+e_2,e_3+e_4\rangle\; \hookleftarrow \; \langle e_1+e_2+e_3+e_4\rangle\; \hookrightarrow \; \langle e_1+e_3,e_2+e_4\rangle \]
connects $e_I$ to $\pP_1$.
\end{proof}

Set $K_n=\colim{\sS_{n,2}}S$  and consider the map $\phi\co\colim{\pP_1} S\rightarrow K_n$ induced by the inclusion $\pP_1 \rightarrow \sS_{n,2}$. The lemma translates into the statement that the image of $\phi$ generates 
 $K_n$. This follows from the definition of connectivity: An element $\langle e_I\rangle$ of $\sS_{n,1}$ is connected to $\pP_1$ translates into $g_I$ lies in the image of $\phi$. Hence an equivalent statement is the following.
\begin{lem}\label{generation}
Inclusion of the poset $ \pP_1 \rightarrow \sS_{n,2}$
 induces a surjective map on the colimits
 \[
 \phi\co\colim{\pP_1} S\rightarrow K_n.
 \]
 \end{lem}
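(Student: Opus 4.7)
The plan is as follows. By the previous lemma, every element of $\sS_{n,1}$ is connected to $\pP_1$ (in the recursive sense of the definition), so it will suffice to prove the implication: if $\Span{w}$ is connected to $\pP_1$, then $g_w$ lies in the image of $\phi$. Surjectivity of $\phi$ will then follow because $K_n = \colim_{\sS_{n,2}} S$ is generated by the elements $g_I$ for $\langle e_I \rangle \in \sS_{n,1}$ — indeed, from the presentation \eqref{presentation} the only role of the $2$-dimensional spaces in $\sS_{n,2}$ is to impose relations among these $1$-dimensional generators, not to contribute new ones.

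The key algebraic identity that translates geometry into the colimit is the following. For a $2$-dimensional isotropic subspace $W$ with $\pP(\{W\}) = \{\Span{w},\Span{w_1},\Span{w_2}\}$, one has $w = w_1 + w_2$ inside $W$; since the structure map $\iota_W \co S \rightarrow K_n$ is a group homomorphism and is compatible with the inclusions $\langle w_i \rangle \hookrightarrow W$, we obtain the relation
\[
g_w \;=\; \iota_W(w_1+w_2) \;=\; \iota_W(w_1)\,\iota_W(w_2) \;=\; g_{w_1}\,g_{w_2}
\]
in $K_n$. This identity is exactly what the connectivity definition is designed to exploit.

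Given this, I would prove by induction on the depth of the recursive definition of ``connected to $\pP_1$'' that $\Span{w}$ connected to $\pP_1$ implies $g_w \in \mathrm{image}(\phi)$. For the base case, $\Span{w}$ is connected to $\pP_1$ through some $W$ with $\Span{w_1},\Span{w_2} \in \pP_1$ directly; then $g_{w_1}, g_{w_2}$ are images of generators of $\colim_{\pP_1} S$, and the identity above puts $g_w$ in the image as well. For the inductive step, when $\Span{w_1}$ and/or $\Span{w_2}$ are only connected to $\pP_1$ via shorter recursion chains, apply the inductive hypothesis to each and multiply. The recursion terminates in finitely many steps thanks to the explicit finite connecting sequences constructed in the proof of the previous lemma (the chains involving $e_2\sS_{n-1,1}$ and the three-element zigzag $\langle e_1+e_2, e_3+e_4\rangle \hookleftarrow \langle e_1+e_2+e_3+e_4\rangle \hookrightarrow \langle e_1+e_3, e_2+e_4\rangle$).

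The main conceptual work is the translation step, once one has the identity $g_w = g_{w_1} g_{w_2}$ the rest is bookkeeping over the recursion. No essential obstacle is anticipated; the only care required is to make sure that in each recursive expansion the $2$-dimensional witness $W$ chosen does lie in $\sS_{n,2}$ (so that its relation is actually imposed in $K_n$), which is automatic since connectivity is defined entirely within $\sS_{n,2}$.
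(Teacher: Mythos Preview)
Your proposal is correct and follows essentially the same approach as the paper. The paper's argument is compressed into a single sentence just before the lemma statement---``An element $\langle e_I\rangle$ of $\sS_{n,1}$ is connected to $\pP_1$ translates into $g_I$ lies in the image of $\phi$''---and you have unpacked exactly this translation: the identity $g_w = g_{w_1}g_{w_2}$ coming from a $2$--dimensional witness $W$, together with induction on the recursion depth of the connectivity definition, is precisely the mechanism the paper has in mind.
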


By definition, $g_1=\iota_{\Span{e_1}}(e_1)$ commutes with the generators coming from $\pP(\lL(e_1))$.
We will prove that in fact $g_1$ also commutes with $g_2$ the generator corresponding to $e_2$, that is $g_1$ commutes with the generators  $\pP_1=\pP(\lL(e_1))\cup \lbrace \langle e_2 \rangle\rbrace$ of the group $K_n$. Hence $g_1$ is in the center $Z(K_n)$. 

\begin{lem}\label{colim}
For $n\geq 2$
\[
\colim{\aA(G_n)} A/Z(G_n)\cong (\ZZ/2)^{2n+1}. 
\]
\end{lem}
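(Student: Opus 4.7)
The plan is to prove the lemma in two stages: first show $K_n$ is abelian, then pin down its order by exhibiting a surjection onto $V^{2n}\times\ZZ/2$ matched by a generating set of size $2n+1$. By Lemma \ref{generation}, the images of $\pP_1$ generate $K_n$, and by construction $g_1$ commutes with $\iota_{\Span{v}}(v)$ for every $\Span{v}\in\pP(\lL(e_1))$ (each such $\Span{v}$ spans a $2$--dimensional isotropic subspace together with $\Span{e_1}$). So to place $g_1$ in the center it remains only to verify $g_1 g_2 = g_2 g_1$; the $Sp_{2n}(\ZZ/2)$--symmetry of $\sS_n$ will then imply every $g_v$ is central, making $K_n$ elementary abelian of exponent $2$.

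The main obstacle is the commutation $g_1 g_2 = g_2 g_1$. Using $n\geq 2$, I would pick $e_3,e_4$ with $(e_3,e_4)=1$ and both orthogonal to $e_1,e_2$. The three $2$--dimensional isotropic subspaces $\Span{e_{13},e_{24}}$, $\Span{e_{14},e_{23}}$, $\Span{e_{12},e_{34}}$ each realize $e_{1234}$ as the nontrivial sum of their two generators, giving $g_{13}g_{24}=g_{14}g_{23}=g_{12}g_{34}$ in $K_n$. Expanding $g_{ij}=g_i g_j$ whenever $(e_i,e_j)=0$, and rearranging using the known orthogonality commutations among the pairs $\{g_1,g_3\}$, $\{g_1,g_4\}$, $\{g_2,g_3\}$, $\{g_2,g_4\}$, the products $g_{13}g_{24}$ and $g_{14}g_{23}$ reduce to $g_1 g_2 g_3 g_4$ and $g_1 g_2 g_4 g_3$ respectively; cancelling $g_1 g_2$ on the left yields $g_3 g_4 = g_4 g_3$. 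The symplectic involution of $V^{2n}$ swapping the hyperbolic planes $\Span{e_1,e_2}$ and $\Span{e_3,e_4}$ is an automorphism of $\sS_n$ and hence of $K_n$; under it, $g_3 g_4 = g_4 g_3$ transports to $g_1 g_2 = g_2 g_1$, completing the abelianity step.

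For the rank, fix a nondegenerate quadratic form $q\co V^{2n}\to\ZZ/2$ polarizing to the symplectic form, and define $\phi\co K_n\to V^{2n}\times\ZZ/2$ on generators by $\phi(g_v)=(v,q(v))$. This is a well-defined homomorphism because $q(v+w)=q(v)+q(w)$ on isotropic pairs, and it is surjective since its image contains each $(e_i,0)$ together with $\phi(g_{12})=(e_1+e_2,1)$; hence $|K_n|\geq 2^{2n+1}$. For the matching upper bound, set $c_{ij}:=g_i g_j g_{ij}$ for each hyperbolic pair $(e_{2i-1},e_{2i})$. In the now abelian $K_n$ the identity $g_{12}g_{34}=g_1 g_2 g_3 g_4$ derived above rearranges to $c_{12}c_{34}=1$, so $c_{12}=c_{34}$; applying the same argument to each pair of hyperbolic planes in the decomposition $V^{2n}=H_1\oplus\cdots\oplus H_n$ shows all $c_{2i-1,2i}$ coincide with a single central element $c$. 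For $v=\sum v_i\in\bigoplus H_i$, pairwise orthogonality of the $H_i$ yields $g_v=\prod g_{v_i}$, and since $g_{e_{2i-1}+e_{2i}}=c\cdot g_{2i-1}g_{2i}$, every $g_{v_i}$ lies in $\Span{g_{2i-1},g_{2i},c}$. Thus $K_n$ is generated by $g_1,\ldots,g_{2n},c$, forcing $|K_n|\leq 2^{2n+1}$; combined with the surjection $\phi$, $K_n\cong(\ZZ/2)^{2n+1}$.
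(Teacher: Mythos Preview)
Your proof is correct, and it takes a genuinely different route from the paper's. The paper proceeds by induction on $n$: for the base case $n=2$ it writes down an explicit chain of five relations (equations~(\ref{relations})) that eventually force $g_2 g_1 g_2 = g_1$, and the inductive step embeds $K_{n-1}$ inside $K_n$ and reuses those same relations. You instead work directly for all $n\ge 2$: the observation that the three isotropic planes $\langle e_{13},e_{24}\rangle$, $\langle e_{14},e_{23}\rangle$, $\langle e_{12},e_{34}\rangle$ all contain $e_{1234}$ gives $[g_3,g_4]=1$ in one line, and you then transport this to $[g_1,g_2]=1$ via the symplectic involution swapping the two hyperbolic planes. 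This is more transparent than the paper's relation-chasing and avoids the induction entirely.

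Your treatment of the rank is also sharper. The paper asserts that the minimal number of generators of $K_n$ is $2n+1$ but does not justify the lower bound explicitly; you supply it cleanly via the homomorphism $\phi(g_v)=(v,q(v))$ onto $V^{2n}\times\ZZ/2$, where the quadratic refinement $q$ guarantees compatibility with the isotropic relations $g_v g_w = g_{v+w}$. Your matching upper bound, showing that all the hyperbolic ``defects'' $c_{2i-1,2i}=g_{2i-1}g_{2i}g_{e_{2i-1}+e_{2i}}$ coincide, is a nice structural way to see that $g_1,\dots,g_{2n}$ together with a single extra element generate $K_n$. The trade-off is that the paper's inductive framework makes the passage from $K_{n-1}$ to $K_n$ explicit, which may be useful if one wants to track how the extra $\ZZ/2$ propagates; your argument is more uniform but less incremental.
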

\begin{proof}
First recall from \ref{presentation} that each generator of $K_n$ has order $2$, showing that it is also abelian is sufficient.
We do induction on $n$. 
First, we compute the case $n=2$. 
By the decomposition (\ref{dec}), it is easy to see
\[
\pP(\lL(e_1))=\lbrace \langle e_1\rangle ,\langle e_3\rangle ,\langle e_1+e_3\rangle ,\langle e_4\rangle ,\langle e_1+e_4\rangle ,
\langle e_3+e_4\rangle ,\langle e_1+e_3+e_4\rangle \rbrace.
\]
One can check that the minimal number of generators of the image of $\phi$, in Lemma \ref{generation}, is $5$ and those are $g_1$, $g_3$, $g_4$, $g_{3,4}$ and $g_2$. Observe that $g_1$ commutes with all but $g_2$. We have the following relations in $K_2$

\begin{equation}\label{relations}
\begin{array}{ccc}
g_2g_{2,3,4}&=& g_1g_{1,3,4},\\
g_2g_{1,3}g_{1,2,4}&=&g_1g_{1,3,4},\\
g_2g_{1,3}g_{2,3}g_{1,3,4}&=&g_1g_{1,3,4},\\
g_2g_1g_3g_2g_3&=&g_1,\\
g_2g_1g_2&=&g_1. 
\end{array}
\end{equation}
 Therefore $g_1$ is in the center of $K_2$. It suffices to show that the remaining generators in $J=\set{g_1,g_3,g_4,g_{3,4},g_2}$ are also central. Given $g$ in $J$ there exists an automorphism of $(\ZZ/2)^4$  which induces an automorphism on $K_2$ such that $g$ is mapped to $g_1$. Hence any element in $J$ is central in $K_2$. We conclude that $K_2$ is an abelian group since it is generated by the elements in $J$. 
 
For the general case, first identify the polar space of $V^{2n}/\langle e_1,e_2\rangle$ with $\sS_{n-1}$ and by induction the associated colimit is $K_{n-1}\cong (\ZZ/2)^{2n-1}$. Consider $K_{n-1}$ as a subgroup of $K_n$. By Lemma \ref{generation}, $K_n$ has $2n+1$ generators, namely $e_1$, $e_2$,  and the  $2n-1$ generators of $K_{n-1}$. Note that $g_1$ and $g_2$ centralizes $K_{n-1}$ and the relations   
 (\ref{relations}) hold in $K_n$. Hence $[g_1,g_2]=1$.
\end{proof} 
 Recall that the fundamental group of $B(2,G_n)$ is isomorphic to the colimit of abelian subgroups of $G_n$.  
\begin{thm}\label{colim2}
Let $G_n$ denote an extraspecial $2$--group of order $2^{2n+1}$ then 
\[ \pi_1(B(2,G_n))\cong \colim{\nN(2,G_n)}A\cong G_n\times \ZZ/2 \;\; \text{ for } n\geq2. \]
\end{thm}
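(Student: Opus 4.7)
The plan is to identify $\pi_1(B(2,G_n))$ with $K := \colim_{\aA(G_n)} A$ using Proposition \ref{hocolim} together with \cite[Corollary 5.1]{far}, and then to build an isomorphism $K \cong G_n \times \ZZ/2$ in the form $(\psi,\epsilon)$, where $\psi\co K\to G_n$ is the canonical surjection and $\epsilon\co K\to \ZZ/2$ is a suitable auxiliary homomorphism. The exact sequence (\ref{eq:quotient}) combined with Lemma \ref{colim} gives
\[ |K| = |Z(G_n)|\cdot |K/Z(G_n)| = 2 \cdot 2^{2n+1} = 2^{2n+2} = |G_n \times \ZZ/2|, \]
so any surjection $K \to G_n \times \ZZ/2$ will automatically be an isomorphism.

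Next I analyze $K$ as a central extension of $G_n$. The natural map $Z(G_n)\to K$ appearing in (\ref{eq:quotient}) is injective, because its composition with $\psi$ is the identity on $Z(G_n)\subseteq G_n$; denote its image by $Z_K\cong \ZZ/2$. In particular $\ker\psi \cap Z_K = 1$. Since $K/Z_K \cong (\ZZ/2)^{2n+1}$ is abelian we have $[K,K]\subseteq Z_K$, and because $K$ surjects onto the non-abelian group $G_n$ it is itself non-abelian, forcing $[K,K]=Z_K$. Thus $\psi$ restricts to an isomorphism $[K,K]\stackrel{\sim}{\to}[G_n,G_n] = Z(G_n)$, which forces $\ker\psi \cap [K,K] = 1$. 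Consequently $\ker\psi\cong \ZZ/2$ injects into the abelianization $K^{ab} = (\ZZ/2)^{2n+1}$.

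Since $\ker\psi$ is a nontrivial subspace of the $\ZZ/2$--vector space $K^{ab}$, one can choose a linear functional $\epsilon\co K^{ab}\to \ZZ/2$ that is nontrivial on $\ker\psi$; precomposing with the abelianization produces $\epsilon\co K\to \ZZ/2$. The induced map $(\psi,\epsilon)\co K\to G_n\times \ZZ/2$ has trivial kernel $\ker\psi\cap \ker\epsilon = 1$, and the order count above promotes it to an isomorphism.

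The genuine content of the theorem is already contained in Lemma \ref{colim} and, upstream, in Theorem \ref{rank2} together with the polar space analysis. The remainder is essentially bookkeeping on a central $\ZZ/2$--extension, and the only place to be slightly careful is verifying $\ker\psi\cap [K,K] = 1$. This is the step that allows the central extension to split as a direct product, and it follows cleanly from the observation that $\psi$ must map the (order-$2$) commutator subgroup of $K$ isomorphically onto $[G_n,G_n] = Z(G_n)$.
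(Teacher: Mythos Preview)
Your proof is correct and follows essentially the same approach as the paper: both arguments use Lemma \ref{colim} to determine $|K|$, establish $[K,K]=Z_K$ and $\ker\psi\cap[K,K]=1$, and then split the resulting central $\ZZ/2$--extension. The only cosmetic difference is that the paper realizes the splitting by choosing a complement $C\subset K$ to $\ker\psi$, whereas you realize it dually by producing a retraction $\epsilon\co K\to\ZZ/2$; these are equivalent formulations of the same splitting argument.
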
  
\begin{proof}
Let us denote the colimit in the theorem by $\widehat{K}_n$, recall the exact sequence of (\ref{eq:quotient}). There are exact sequences of groups
\[
\begin{CD}
 @. \ZZ/2 @= \ZZ/2 \\
 @. @VVV    @VVV\\
Z(G_n)@>>> \widehat{K}_n @>\pi>> K_n \\
@| @VV\psi V  @VV\psi'V\\
Z(G_n) @>>> G_n @>>> (\ZZ/2)^{2n} 
\end{CD}
\] 
where $Z(G_n)\cong \ZZ/2$ and by Lemma \ref{colim},  $K_n\cong (\ZZ/2)^{2n+1}$ which forces $\ker(\psi)\cong\ZZ/2$. Note that $Z(G_n)\cap \ker(\psi)=1$ which implies that this kernel is in the center $Z(\widehat{K}_n)$, and $\psi$ induces an isomorphism
\[ [\widehat{K}_n,\widehat{K}_n]\cong [G_n,G_n].\]
This implies that $[\widehat{K}_n,\widehat{K}_n] \cap \ker(\psi)=1$ and $[\widehat{K}_n,\widehat{K}_n]=Z(G_n)$. 

Denote the product $[\widehat{K}_n,\widehat{K}_n]\ker(\psi)$ in $\widehat{K}_n$ by $V$.
Choose a complement $\overline{C}$ of $V/[\widehat{K}_n,\widehat{K}_n]$ in $\widehat{K}_n/[\widehat{K}_n,\widehat{K}_n]$. Let $C$ be the corresponding subgroup of $\widehat{K}_n$. Then $\widehat{K}_n=C\ker(\psi)$ and $C\cap \ker(\psi)=1$, hence
\[
C\cong \widehat{K}_n/\ker(\psi) \cong G_n.
\]
This implies that $\widehat{K}_n$ is isomorphic to the direct product $G_n\times \ZZ/2$.

\end{proof}  

\section{Examples}\label{sec:examples}

\subsection{Transitively commutative groups}
A non-abelian group $G$ is called a \textit{transitively commutative group (TC--group)} if commutation is a transitive relation for non-central elements that is $[x,y]=1=[y,z]$ implies $[x,z]=1$ for all $x,y,z\in G-Z(G)$.  Any pair of maximal abelian subgroups of $G$ intersect at the center $Z(G)$. 

For TC--groups, the homotopy types of $E(2,G)$ and $B(2,G)$ are studied in \cite{adem1}. The exact sequence (\ref{eq:homotopy}) of homotopy groups associated to the fibration $E(2,G)\rightarrow B(2,G)\rightarrow BG$ gives
\[
1\rightarrow T(2)\rightarrow G(2)\rightarrow G\rightarrow 1
\]
where $G(2)$ is isomorphic to the amalgamated product of  maximal abelian subgroups $\{ M_i \}_{1\leq i\leq k}$ along the center of the group, and $T(2)$ is a free group of certain rank.
It turns out that $E(2,G)$ and $B(2,G)$ are $K(\pi,1)$ spaces: $E(2,G)$ is homotopy equivalent to a wedge of circles and $B(2,G)\simeq B(G(2))$.

Higher limits of the representation ring functor $R\co\nN(2,G)\rightarrow \catAb$ fit into an exact sequence
\[
0\rightarrow \ilim{}{0} R\rightarrow R(Z(G))\oplus \bigoplus_{i=1}^k R(M_i) \stackrel{\theta}{\rightarrow}  \bigoplus_{i=1}^k R(Z(G))\rightarrow \ilim{}{1} R\rightarrow 0 
\]
where $\theta(z,m_1,...,m_k)= (z-\res_{M_1,Z(G)}m_1,...,z-\res_{M_k,Z(G)}m_k)$. No higher limit occurs $\ilim{}{1} R=0$, that is $\theta$ is surjective, since
the restriction maps are surjective $R(M_i)\rightarrow R(Z(G))$. The same conclusion holds for the restriction of the functor $R$ to $\nN(2,G)_p\subset \nN(2,G)$.
Therefore the spectral sequence in (\ref{E2}) collapses at the $E_2$--page and after completion
\[
K^i(B(2,G)) 
\cong \left\{
\begin{array}{ll}
\ZZ\oplus \bigoplus_{p\mid |G|} \ZZ_p^{n_p} &  \text{ if $i=0$, }  \\
0 &\text{ if $i=1$.} 
\end{array} \right.
\]

\subsection{Extraspecial $2$--groups}
This class of groups are interesting since they provide counter examples to the following question.
\begin{que}\cite[page 15]{adem1}
\rm{If $G$ is a finite group,  are the spaces $B(q,G)$ Eilenberg--Mac Lane spaces of type $K(\pi,1)$? }
\end{que}
Recall from \S \ref{sec:colim extra} that  an extraspecial $2$--group of order $2^{2n+1}$  is an extension of the form
\[
\ZZ/2 \rightarrow G_n \stackrel{\pi}{\rightarrow} (\ZZ/2)^{2n}
\] 
where $\ZZ/2$ is  the center $Z(G_n)$. We consider the homotopy type of $B(2,G_n)$ for this class of groups. There is a fibration $E(2,G_n)\rightarrow B(2,G_n)\rightarrow BG_n$ and a corresponding exact sequence of homotopy groups  (\ref{eq:homotopy})
\[
1\rightarrow \pi_1(E(2,G_n)) \rightarrow \pi_1(B(2,G_n))\stackrel{\psi}{\rightarrow} \pi_1(BG_n)\rightarrow 1,
\]
where $\psi$ is the natural map
\[
\pi_1(B(2,G_n))\cong \colim{\nN(2,G_n)} A \rightarrow G
\]
induced by inclusions of abelian subgroups $A\subseteq G$. We calculated this colimit in Theorem \ref{colim2}
\[
\colim{\nN(2,G_n)} A\cong G_n\times \ZZ/2
\]
which forces the kernel of $\psi$ to be isomorphic to $\ZZ/2$. This implies that the fundamental group of $E(2,G_n)$ is isomorphic to $\ZZ/2$. Recall the homotopy colimit description of $E(2,G)$, there is a weak equivalence
\[
E(2,G_n)\simeq \hocolim{\nN(2,G_n)} G_n/A.
\]
This homotopy colimit is a finite dimensional complex. It can be described as the nerve of a category with finitely many morphisms. Therefore $H^*(E(2,G_n);\ZZ)$ is non-zero only for finitely many degrees whereas it is well-known that $H^*(B\ZZ/2;\ZZ)$ is non-zero for every even degree. Therefore $E(2,G_n)$ does not have the homotopy type of a $K(\ZZ/2,1)$ space, that is it has non-vanishing higher homotopy groups. The same conclusion holds for $B(2,G_n)$ since from the fibration $E(2,G)\rightarrow B(2,G)\rightarrow BG$ we see that
\[
\pi_k(E(2,G_n))\cong \pi_k(B(2,G_n))\; \text{ for } k\geq 2.
\]
Thus $B(2,G_n)$ does not have the homotopy type of $K(G_n\times \ZZ/2,1)$. 

Indeed for the extraspecial $2$--groups of order $32$, $G_2=G^+_2$ or $G^-_2$, consider the fibration
\[
\begin{CD}
\widetilde{B(2,G_2)} &@>>> & B(2,G_2) \\
  &@. & @VVV \\
  &@. & B(G_2\times \ZZ/2)
\end{CD}
\]
where the fibre is a simply connected  complex of dimension $2$
\[
\widetilde{B(2,G_2)}= \hocolim{\nN(2,G_2)} (G_2\times \ZZ/2)/A.
\]
Cohomology groups of this fiber can be calculated from the chain complex 
\[
 \prod_{A_0\leftarrow A_1\leftarrow A_2} \ZZ[(G_2\times \ZZ/2)/A_2] \rightarrow 
\prod_{A_0\leftarrow A_1} \ZZ[(G_2\times \ZZ/2)/A_1] \rightarrow \prod_{A_0} \ZZ[(G_2\times \ZZ/2)/A_0],
\]
where $A_i$ are abelian subgroups  of $G_n$. It is sufficient to consider the ones which contain the center $Z(G_n)$ (Remark \ref{cofinal}). By a counting argument using the structure of the poset which is isomorphic to the polar space (Definition \ref{polar space}) of $Sp_4(2)$ (union the trivial space $0$), one can calculate the Euler characteristic 
\begin{eqnarray*}
\chi (\widetilde{B(2,G_2)})&=&  \frac{64}{2}|\set{0\subset W_1 \subset W_2}|- \frac{64}{2|W_1|} |\set{ W_1 \subset W_2}|+ \frac{64}{2|W_2|}|\set{  W_2}| \\
&=&  \frac{64}{2}45- \left( \frac{64}{4}45+\frac{64}{2}30 \right) + \left( \frac{64}{8}15+\frac{64}{4}15+ \frac{64}{2} \right) \\
&=& 152,
\end{eqnarray*}
where $W_i$ are the isotropic subspaces of $V^4=(\ZZ/2)^4$ as described in \S \ref{sec:colim extra}.
It follows that the higher homotopy groups of $B(2,G_2)$ are isomorphic to the homotopy groups of a wedge of spheres
\[
\pi_i(B(2,G_2))\cong \pi_i(\bigvee^{151}S^2)\; \text{ for } i>1.
\]

As for the $K$--theory, non-vanishing higher limits of the representation ring functor occur in the Bousfield--Kan spectral sequence. Again in the case of $G_2$ a calculation in GAP \cite{gap} shows that 
\[\ilim{\nN(2,G_2)}{i} R \cong \left\lbrace \begin{array}{ll}
\ZZ^{32} & i=0, \\
(\ZZ /2)^9 & i=1, \\
0 & \text{ otherwise.}
\end{array}  \right. \] 
Therefore the spectral sequence in (\ref{E2}) collapses at the $E_2$--page and after completion
\[
K^i(B(2,G_2)) 
\cong \left\{
\begin{array}{ll}
 \ZZ \oplus \ZZ_2 ^{31}&  \text{ if $i=0$, }  \\
(\ZZ /2)^9 &\text{ if $i=1$.} 
\end{array} \right.
\]
Note that the $K$--theory computation also confirms  that $B(G_2,2)$ is not homotopy equivalent to $B(G_2\times \ZZ/2)$, as the latter has $K^1(B(G_2\times \ZZ/2))=0$ as a consequence of the Atiyah--Segal completion theorem.

\section{Appendix}\label{sec:app}

In this section we review some definitions about simplicial sets especially homotopy colimits. We refer the reader to \cite{goerss} and \cite{dwyer1}.

The category of simplicial sets $\catS$ and the category of topological spaces $\catTop$ are closely related. Given $T$ a topological space there is a functor  $\catTop\rightarrow \catS$ called the \textit{singular set}  defined by
\[
\Sing(T)\co \catn\mapsto \Hom_\catTop(|\Delta^n|,T)
\]
where $|\Delta^n|$ is the topological standard $n$--simplex. Conversely a simplicial set $X$ gives rise to a topological space via the \textit{geometric realization} functor
\[
|X|=\coker \left\lbrace \coprod_{\theta\co \catm\rightarrow\catn} X_n\times |\Delta^m|\rightrightarrows\coprod_{\catn} X_n\times |\Delta^n| \right\rbrace 
\]
where $(x,p)\mapsto (x,\theta_*(p))$ and $(x,p)\mapsto (\theta^*(x),p)$. There is an adjoint relation between these two functors
\[
|.|\co \catS \rightleftarrows \catTop :\!\Sing.
\]
Note that $|.|$ preserves all colimits in $\catS$ since it has a right adjoint.

Let $\catI$ be a small category, and $i\in \catI$ an object,  the under category $\catI\setminus i$ is the category whose objects are the maps $i\rightarrow i_0$ and  
 morphisms  from $i\rightarrow i_0$ to $i\rightarrow i_1$ are commutative diagrams
 \[
 \begin{diagram}
 i & \rTo & i_0 \\
  &\rdTo & \dTo \\
  &&i_1. 
 \end{diagram}
 \]
Let $F\co\catI \rightarrow \catTop$ be a functor. The \textit{homotopy colimit} of $F$ is the topological space
\[
\hocolim{} F=\coker \left\lbrace \coprod_{f\co j \rightarrow i} B(\catI\setminus i)  \times F(j)\rightrightarrows\coprod_{i} B(\catI\setminus i)\times F(i) \right\rbrace 
\]
where $(b,y)\mapsto (B(f)(b),y)$ and $(b,y)\mapsto (b,f(y))$. If one wants to work in the category of simplicial sets, given $F\co \catI \rightarrow \catS$ one can use the simplicial set $B_\n(\catI\setminus i)$ instead. 

From the definition we see that there is a natural map to the ordinary colimit
\[
\hocolim{} F\rightarrow \colim{} F
\]
obtained by collapsing the nerves $B(\catI\setminus i)\rightarrow \pt$. This map is not usually a weak equivalence. In general, it is a weak equivalence if the diagram $F\co \catI\rightarrow \catS$ is a \textit{free} diagram (\cite[Appendix HC]{far2}). A diagram of sets $\catI\rightarrow \catSet$ is free if it is of the form $\coprod_i F^i$ for a collection of subobjects of $\catI$ where each diagram $F^i\co \catI\rightarrow \catSet$  is defined by $F^i(j)=\Hom_\catI(i,j)$. A diagram of simplicial sets is free if in each dimension it gives a free diagram of sets. In particular, a collection of sets which is closed under taking subsets and partially ordered by inclusion gives a free diagram of sets.

 A standard example of a homotopy colimit is the Borel construction. Let $X$ be a $G$--space and consider $\catG$ the category associated to the group then   
\[
\hocolim{} F \simeq  X\times_G EG 
\]
where $F\co \catG\rightarrow \catTop$ sends the single object to $X$.
 The most important property of homotopy colimits is that  a natural transformation $F\rightarrow F'$ of functors such that $F(i)\rightarrow F'(i)$ is a weak equivalence for all $i\in \catI$ induces a weak equivalence
\[
\hocolim{} F\rightarrow \hocolim{} F'.
\]
 Another useful property we use in the paper is the commutativity of homotopy colimits. We refer the reader to \cite{wzz} for further properties of homotopy colimits.

\end{document}